\documentclass[11pt]{article}

\usepackage[T1]{fontenc}
\usepackage{lmodern}
\usepackage{amsmath,amssymb,amsthm,mathtools}
\usepackage[margin=1in]{geometry}
\usepackage{microtype}
\usepackage[colorlinks=true,linkcolor=blue,citecolor=blue,urlcolor=blue]{hyperref}
\hypersetup{
  pdftitle={Affine Spherical Mass and a Gaussian Kinematic Formula for Tropical Two-Fans},
  pdfauthor={Nikita Kalinin},
  pdfkeywords={tropical geometry, stable intersection, balanced fans,
    lattice projections, integral geometry, affine isotropic position}
}

\theoremstyle{plain}
\newtheorem{theorem}{Theorem}[section]
\newtheorem{proposition}[theorem]{Proposition}
\newtheorem{lemma}[theorem]{Lemma}
\newtheorem{corollary}[theorem]{Corollary}
\theoremstyle{definition}
\newtheorem{example}[theorem]{Example}
\newtheorem{definition}[theorem]{Definition}
\theoremstyle{remark}

\newcommand{\R}{\mathbb R}
\newcommand{\Z}{\mathbb Z}
\newcommand{\E}{\mathbb E}
\newcommand{\tr}{\operatorname{tr}}
\newcommand{\Span}{\operatorname{span}}
\newcommand{\relint}{\operatorname{relint}}
\newcommand{\Sym}{\operatorname{Sym}}
\newcommand{\Id}{\mathrm I}
\newcommand{\covol}{\operatorname{covol}}
\newcommand{\rank}{\operatorname{rank}}
\newcommand{\norm}[1]{\lVert#1\rVert}
\newcommand{\ip}[2]{\langle#1,#2\rangle}
\newcommand{\Gr}{\operatorname{Gr}}

\title{Affine Spherical Mass and a Gaussian Kinematic Formula for Tropical
Two-Fans}
\author{Nikita Kalinin\\[0.5ex]
  {\small Faculty of Mathematics, Technion--Israel Institute of Technology,}\\
  {\small Haifa 3200003, Israel}\\
  {\small Guangdong Technion--Israel Institute of Technology,}\\
  {\small 241 Daxue Road, Shantou, Guangdong 515063, China}\\[0.5ex]
  {\small \href{mailto:nikita.kalin@technion.ac.il}
  {\texttt{nikita.kalin@technion.ac.il}}}\\
  {\small \href{https://orcid.org/0000-0002-1613-5175}
  {ORCID: 0000-0002-1613-5175}}}
\date{}

\begin{document}
\maketitle

\begin{abstract}
Let \(F\) be an effective balanced rational two-dimensional fan in a
rank-four lattice.  We introduce an affine spherical mass
\[
 M_*(F)=\inf_{\covol_g(N)=1}M_g(F)
\]
and prove the two-sided comparison
\[
 \sqrt{\frac83}\,m(F)\leq M_*(F)\leq
 3\sqrt{2\,q(F)}.
\]
Here \(q(F)=\deg(F\cdot F)\) is stable self-intersection and \(m(F)\)
is the least degree of \(F\) under a primitive rank-two lattice quotient.
For a surjection \(A\), one has
\(\deg(A_*F)=\deg(F\cdot[\ker A_\R])\); hence \(m(F)\) is equivalently the
least stable intersection degree with a complete weight-one rational
two-plane.
In particular,
\[
 q(F)\geq\frac4{27}m(F)^2.
\]
The proof combines an exact Gaussian--kinematic formula for mixed stable
intersection with an affine normalization principle for stationary spherical
links.  In dimension four, the identification
\(\Gr^+(2,4)\simeq S^2\times S^2\) converts ambient isotropy into a sharp
second-moment transversality estimate.  We prove affine covariance for
\(M_*\), derive equality and stability conditions for the transversality
estimate, and give examples where ambient isotropy coexists with zero mixed
intersection, as well as a three-fan in six-space with zero self-intersection.
Determining the optimal numerical constant remains open.
\end{abstract}

\medskip
\noindent\textbf{Keywords.}
Tropical geometry; stable intersection; balanced fans; lattice projections;
integral geometry; affine isotropic position.
\medskip

\section{Introduction}

The planar prototype has a direct tropical formulation.  Rotating the
weighted primitive ray generators of an integral effective balanced one-fan
in a rank-two lattice by a quarter-turn gives the oriented edge vectors of a
possibly degenerate convex lattice polygon \(P\).
With the ordered-pair convention for stable intersection,
\[
 \deg(F\cdot F)=2\operatorname{area}(P),\qquad
 \min_{A:\Z^2\twoheadrightarrow\Z}\deg(A_*F)=\omega(P),
\]
where \(\omega(P)\) is the lattice width.  The sharp inequality of Fejes
T\'oth and Makai
\[
 \operatorname{area}(P)\geq\frac38\omega(P)^2
\]
therefore says \(\deg(F\cdot F)\geq\frac34\omega(P)^2\) in this
normalization \cite{FejesTothMakai}.  Equivalently, it bounds the
self-intersection of a one-dimensional fan in terms of its smallest integral
projection.

The constant \(3/8\) also appears in several related settings: in the region
of influence of a singular point in a Newton subdivision
\cite{KalininNewton}, as the density constant for non-separable lattices of
convex plates, and as a width bound in the enumeration of lattice polygons.
In particular, area at most \(A\) implies
\(\omega(P)\leq\sqrt{8A/3}\), providing a width cutoff in the enumeration of
lattice polygons of area at most \(A\), up to affine unimodular equivalence
\cite{BaranyPach,CoolsLemmens,Balletti}.
Related lattice inequalities enter systolic problems for Finsler two-tori
and symplectic capacities of optical hypersurfaces
\cite{AlvarezPaivaBalacheffTzanev}.  Aliev proved a three-dimensional
convex-geometric analogue involving exact reverse-isodiametric and
lattice-width bounds \cite{AlievThreeSpace}.

This paper studies the next tropical case: an effective balanced two-fan
\(F\) in a four-dimensional lattice \(N\).  Put
\[
 q(F)=\deg(F\cdot F),\qquad
 m(F)=\min_{A:N\twoheadrightarrow\Z^2}\deg(A_*F).
\]
For \(K_A=\ker(A\otimes_\Z\R)\), endowed with its primitive complete-plane
weight,
\[
 \deg(A_*F)=\deg(F\cdot[K_A]).
\]
Conversely, every rational two-plane in \(N_\R\), equivalently every plane
spanned over \(\R\) by two independent vectors of \(N\), occurs in this
way.  Thus \(m(F)\) is the least stable intersection degree with such a
weight-one plane \cite[Section~2]{JensenYu}.
Question~12 of Kalinin's thesis asks whether
\(q(F)\geq c\,m(F)^2\) for a universal \(c>0\) \cite{KalininThesis}.  Hept
and Theobald study a related projection problem---self-crossings of tropical
varieties---in a different projection dimension \cite{HeptTheobald}.

The affine spherical mass is the intermediate invariant linking \(m(F)\) and
\(q(F)\).  The central statement is
\begin{equation}\label{eq:intro-sandwich}
 \boxed{\sqrt{\frac83}\,m(F)\leq M_*(F)
 \leq3\sqrt{2q(F)}.}
\end{equation}
The proof consists of four steps.
\begin{enumerate}
\item For every volume-one metric \(g\), the spherical link of \(F\) is a
stationary weighted geodesic network of mass \(M_g(F)\).
\item Stable intersection admits an exact mixed Gaussian formula
\[
 \deg(F\cdot G)=M_g(F)M_g(G)
 \E[B(z,z')^2J(x\cdot y)].
\]
This follows by combining a Gaussian coarea calculation
\cite{Santalo,HowardKinematic,AmelunxenLotz} with the tropical
fan-displacement rule \cite{FultonSturmfels,JensenYu}.
\item When \(M_*(F)>0\), a first-variation argument produces an
almost-minimizing sequence whose tangent covariance tends to the isotropic
matrix.  This parallels Petty's characterization of minimal surface-area position
\cite{PettySurfaceArea,GiannopoulosPapadimitrakis}.
\item The special geometry of \(\Gr^+(2,4)\) yields an exact defect formula
for average transversality, while Rankin's value
\(\gamma_{4,2}=3/2\) bounds a projection degree.  These estimates give the
two sides of \eqref{eq:intro-sandwich}.
\end{enumerate}

The role of self-intersection in four dimensions is essential.  For two
distinct fans, even a common mass-minimizing isotropic metric may leave every
cross-pair of facet planes with a common line, and hence give zero mixed
intersection.  Likewise, a three-fan in \(\R^6\) may have positive affine
mass, exact ambient isotropy, and zero self-intersection.  Positive average
transversality therefore depends on the exceptional geometry of two-planes
in four-space.

\section{Conventions and the spherical mass}

Let \(N\simeq\Z^4\), \(V=N\otimes_\Z\R\), and
\[
 F=\sum_\sigma w_\sigma[\sigma]
\]
be an effective balanced rational two-fan.  We use ordered cone pairs in
the fan-displacement product.  Thus the sum of two complementary primitive
planes of weight one has self-intersection \(2\).

For a lattice quotient \(A:N\twoheadrightarrow\Z^2\), write
\[
 A_*F=d_A(F)[\R^2].
\]
The degrees \(d_A(F)\) are nonnegative integers for integral \(F\), and
\[
 m(F)=\min_A d_A(F).
\]
If the weights have common denominator \(D\), then every \(d_A(F)\) lies in
\(D^{-1}\Z_{\geq0}\), so the minimum exists.  The invariants \(m\) and \(q\)
are homogeneous of degrees one and two, respectively.

After a rational subdivision we may suppose all facets pointed.  Fix a
Euclidean metric \(g\) with \(\covol_g(N)=1\).  For a facet \(\sigma\), let
\[
 N_\sigma=N\cap\Span_\R(\sigma),\qquad
 p_\sigma\in\bigwedge^2N_\sigma
\]
be a primitive generator up to sign, and put
\(I_\sigma=\sigma\cap S_g^3\).  On this great-circle arc define
\begin{equation}\label{eq:linkmeasure}
 d\mu_{F,g}=
 \frac{w_\sigma\norm{p_\sigma}_g}{2\pi}\,ds,\qquad
 M_g(F)=\mu_{F,g}(S_g^3).
\end{equation}
Parametrize at unit speed and write \(x=x(s)\), \(t=x'(s)\), with orientation
chosen so that
\begin{equation}\label{eq:unitbivector}
 z=x\wedge t=\frac{p_\sigma}{\norm{p_\sigma}_g}.
\end{equation}
When \(M_g(F)>0\), the normalized flag law is
\(\nu_{F,g}=\mu_{F,g}/M_g(F)\).

At a link vertex \(x=a/\norm a\), project the tropical balancing equation at
the ray \(\R_{\geq0}a\) orthogonally to \(a\).  This gives the tension equation
\begin{equation}\label{eq:tension}
 \sum_{\sigma\ni a}w_\sigma\norm{p_\sigma}\,
 t_{\sigma,\mathrm{out}}=0.
\end{equation}
Thus the link is a stationary geodesic network in the standard sense of
first variation \cite{Allard,AllardAlmgren}.  Since \(x'=t\) and \(t'=-x\)
on a great circle, integrating
\((x\otimes t)'=t\otimes t-x\otimes x\) and using
\eqref{eq:tension} yields
\begin{equation}\label{eq:stationarity}
 \int t\otimes t\,d\mu_{F,g}
 =\int x\otimes x\,d\mu_{F,g}.
\end{equation}
Taking expectation with respect to \(\nu_{F,g}\), set
\begin{equation}\label{eq:T}
 T_g=\E_g(tt^T)=\E_g(xx^T),\qquad
 \tr T_g=1.
\end{equation}
All unlabeled matrix norms below are Hilbert--Schmidt norms for the ambient
metric.

\section{The affine mass}

Combine the numerical weight and primitive lattice bivector into the facet
density
\[
 \xi_\sigma=w_\sigma p_\sigma.
\]
For \(h\in GL(V)\), send \((\sigma,\xi_\sigma)\) to
\((h\sigma,(\bigwedge^2h)\xi_\sigma)\), and denote the resulting weighted fan
by \(h_\#F\).  Directly from \eqref{eq:linkmeasure},
\begin{equation}\label{eq:masscovariance}
 M_g(h_\#F)=M_{h^*g}(F).
\end{equation}
This defines an action on real weighted conical densities and preserves
integrality for lattice automorphisms.

\begin{definition}\label{def:affinemass}
The affine spherical mass of \(F\), relative to the lattice volume, is
\[
 M_*(F)=\inf_{\covol_g(N)=1}M_g(F).
\]
\end{definition}

\begin{proposition}[Affine and convex-cone properties]\label{prop:mass-properties}
The following statements hold.
\begin{enumerate}
\item Both \(M_g\) and \(M_*\) are invariant under subdivision, and
\(M_*(\lambda F)=\lambda M_*(F)\) for \(\lambda\geq0\).
\item For effective fans,
\[
 M_*(F+G)\geq M_*(F)+M_*(G).
\]
\item Keeping the ambient lattice volume fixed,
\[
 M_*(h_\#F)=|\det h|^{1/2}M_*(F).
\]
In particular \(M_*\) is \(SL(V)\)-invariant.  If the lattice is transported
to \(hN\) as well, then \(M_{*,hN}(h_\#F)=M_{*,N}(F)\).
\item If \(N'\subset N\) has index \(I\) and the geometric densities
\(\xi_\sigma\) are kept fixed, then
\[
 M_{*,N'}(F)=I^{-1/2}M_{*,N}(F).
\]
\end{enumerate}
\end{proposition}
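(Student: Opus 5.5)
The plan is to reduce everything to the explicit formula
\[
M_g(F)=\frac1{2\pi}\sum_\sigma w_\sigma\norm{p_\sigma}_g\,\theta_g(\sigma),
\]
where \(\theta_g(\sigma)\) is the \(g\)-angle of the pointed cone \(\sigma\). Equivalently, \(M_g(F)\) is \((2\pi)^{-1}\) times the \(g\)-length of the spherical link weighted by \(\norm{\xi_\sigma}_g\).

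\textbf{Part 1.} A rational subdivision of \(\sigma\) into subcones \(\sigma_i\) leaves \(\Span\sigma_i=\Span\sigma\). Hence \(N_{\sigma_i}=N_\sigma\) and \(p_{\sigma_i}=\pm p_\sigma\). The weights are inherited, and the arcs \(I_{\sigma_i}\) partition \(I_\sigma\) up to endpoints. So \(\mu_{F,g}\) is unchanged, and \(M_g\) is subdivision invariant; taking the infimum gives the same for \(M_*\). Scaling all weights by \(\lambda\) scales \(\mu_{F,g}\) by \(\lambda\), so \(M_g(\lambda F)=\lambda M_g(F)\) and \(M_*(\lambda F)=\lambda M_*(F)\).

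\textbf{Part 2.} Pass to a common refinement of \(F\) and \(G\), using Part 1, so that the facets of \(F+G\) are facets of \(F\) or \(G\) with added weights. Effectivity is needed here: weights are nonnegative, so no cancellation occurs and the measures simply add, \(\mu_{F+G,g}=\mu_{F,g}+\mu_{G,g}\). Then \(M_g(F+G)=M_g(F)+M_g(G)\geq M_*(F)+M_*(G)\), and taking the infimum over \(g\) gives superadditivity.

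\textbf{Part 3.} Start from \eqref{eq:masscovariance}, \(M_g(h_\#F)=M_{h^*g}(F)\). Note that \(h_\#F\) uses the transported densities \((\bigwedge^2h)\xi_\sigma\), while the norm \(\norm{\cdot}_g\) and the angle are evaluated intrinsically on the cone. If \(\covol_g(N)=1\), then \(\covol_{h^*g}(N)=\covol_g(hN)=|\det h|\).

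The mass has a definite homogeneity under scaling the metric. The map \(g\mapsto c^2g\) leaves angles unchanged and multiplies bivector norms by \(c^2\), so \(M_{c^2g}=c^2M_g\). It also multiplies the covolume by \(c^4\). Therefore
\[
M_g(F)\,\covol_g(N)^{-1/2}
\]
is scale invariant, and
\[
\inf_{\covol_g(N)=1}M_g(F)=\inf_g M_g(F)\,\covol_g(N)^{-1/2}.
\]
Applying this with the metric \(h^*g\) gives \(M_*(h_\#F)=|\det h|^{1/2}M_*(F)\). \(SL(V)\)-invariance is the case \(|\det h|=1\).

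If the lattice is also transported to \(hN\), then \(\covol_g(hN)=\covol_{h^*g}(N)\). The constraint sets correspond bijectively under \(g\leftrightarrow h^*g\), which gives \(M_{*,hN}(h_\#F)=M_{*,N}(F)\).

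\textbf{Part 4.} By the scale-invariant form above, with the densities \(\xi_\sigma\) fixed, \(M_g(F)\) does not depend on the lattice at all. Only the normalization changes, through \(\covol_g(N')=I\,\covol_g(N)\). Hence
\[
M_{*,N'}(F)=\inf_g M_g\,I^{-1/2}\covol_g(N)^{-1/2}=I^{-1/2}M_{*,N}(F).
\]

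\textbf{Main obstacle.} The main subtlety is bookkeeping in Part 3. One must confirm that \eqref{eq:masscovariance} is meant with \(h_\#F\) carrying the densities \((\bigwedge^2h)\xi_\sigma\), not renormalized primitive generators. Otherwise primitivity relative to \(N\) versus \(hN\) would introduce index factors; these are exactly what Part 4 accounts for. I would verify this directly on a single facet, where \(\norm{(\bigwedge^2h)\xi}_g=\norm{\xi}_{h^*g}\) and the \(g\)-angle of \(h\sigma\) equals the \(h^*g\)-angle of \(\sigma\).
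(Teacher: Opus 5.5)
Your proposal is correct and follows the paper's proof. Parts 1--2 use subdivision and common refinement, and parts 3--4 use the covariance identity \(M_g(h_\#F)=M_{h^*g}(F)\) together with uniform metric rescaling. Your scale-invariant form \(\inf_g M_g(F)\,\covol_g(N)^{-1/2}\) repackages the paper's explicit renormalized metrics \(\widehat g=|\det h|^{-1/2}h^*g\) and \(I^{-1/2}g\).
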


\begin{proof}
The new spherical arcs in a subdivision partition the old arcs, so their
masses add.  Homogeneity is immediate.  After a common refinement,
\(M_g(F+G)=M_g(F)+M_g(G)\); taking infima proves superadditivity.

For affine covariance, let \(g\) have lattice covolume one and set
\[
 \widehat g=|\det h|^{-1/2}h^*g.
\]
Then \(\covol_{\widehat g}(N)=1\), while uniform metric scaling and
\eqref{eq:masscovariance} give
\[
 M_g(h_\#F)=M_{h^*g}(F)=|\det h|^{1/2}M_{\widehat g}(F).
\]
The map \(g\mapsto\widehat g\) is a bijection of the volume-one metric space,
so taking infima proves the third assertion.  If \(N'\subset N\) has index
\(I\), then
\(I^{-1/2}g\) has \(N'\)-covolume one whenever \(g\) has \(N\)-covolume one,
and its link mass is \(I^{-1/2}M_g(F)\).  This proves the fourth assertion.
\end{proof}

The finite-index statement tracks the geometric density \(\xi_\sigma\).
Retaining the raw numerical weights makes the primitive bivector of each
facet change by its individual index
\[
 [N\cap\Span(\sigma):N'\cap\Span(\sigma)],
\]
so the transformation law is determined facet by facet.
Weight scaling and ambient dilation act differently.  Multiplying
all weights by \(c\geq0\) multiplies \(M_*\) by \(c\).  The pushed-density
action of the scalar map \(h=\lambda\Id\) gives
\[
 M_*((\lambda\Id)_\#F)=|\lambda|^2M_*(F).
\]
Set-theoretically a fan is unchanged by positive scalar dilation; the latter
formula concerns its transported two-dimensional density.

Thus \(M_*\) is a homogeneous concave functional on the cone of effective
fans.
Superadditivity can be strict.  This already occurs for the complementary
coordinate planes
\[
 P=\Span(e_1,e_2),\qquad Q=\Span(e_3,e_4).
\]
Each plane individually has affine mass zero.  Their sum satisfies
\[
 M_g(P+Q)=\norm{e_1\wedge e_2}_g+
          \norm{e_3\wedge e_4}_g\geq2
\]
for every volume-one metric, with equality in the standard metric.  Hence
\(M_*(P+Q)=2\).  Moreover,
\[
 q(P+Q)=2,\qquad m(P+Q)=0,
\]
since the quotient with kernel \(\Span(e_2,e_4)\) has rank at most one on
each component.  Thus positive affine mass can coexist with zero projection
minimum.  Every fan with \(m(F)>0\) has \(M_*(F)>0\).

Affine covariance also characterizes vanishing:
\begin{equation}\label{eq:affine-collapse}
 M_*(F)=0\quad\Longleftrightarrow\quad
 \text{there exist \(h_k\in SL(V)\) with }
 M_{g_0}(h_{k\#}F)\longrightarrow0
\end{equation}
for some, and hence every, reference metric \(g_0\).  Thus \(M_*(F)=0\)
means affine collapse of the weighted link.  This occurs when the support
lies in a proper linear subspace, or when all facet planes contain a common
line:
scale that line by \(\varepsilon^3\) and a complementary three-space by
\(\varepsilon^{-1}\); every density bivector then scales by
\(\varepsilon^2\).  These are sufficient conditions; a full flag-theoretic
or Hilbert--Mumford-type criterion remains open.

\section{An exact mixed Gaussian formula}

Let \(F=\sum_\sigma w_\sigma[\sigma]\) and
\(G=\sum_\tau v_\tau[\tau]\) be effective balanced rational two-fans in
\(N_\R\).  For nonzero \(F,G\), equip their links with the normalized laws
associated with the same volume-one metric.  For flags
\[
 z=x\wedge t,\qquad z'=y\wedge u,
\]
put
\[
 B(z,z')=\det_g(x,t,y,u),\qquad c=x\cdot y,
\]
and define
\begin{equation}\label{eq:J}
 J(c)=\int_{r,\rho>0}r\rho
 \exp\left(-\frac{r^2+\rho^2-2cr\rho}{2}\right)\,dr\,d\rho,
 \qquad -1\leq c<1.
\end{equation}
Whenever the two facet planes share a line, the expression
\(B(z,z')^2J(x\cdot y)\) is defined to be zero as a whole.  This convention
also covers the formal endpoint \(x\cdot y=1\), where \(B=0\).

\begin{lemma}[Radial Jacobians]\label{lem:radial-jacobians}
Let \(x=x(s)\) and \(y=y(a)\) be unit-speed spherical arcs, with
\(t=x'(s)\) and \(u=y'(a)\).
\begin{enumerate}
\item The map
\[
 (r,s,\rho,a)\longmapsto rx(s)-\rho y(a)
\]
has absolute Jacobian
\(r\rho|\det(x,t,y,u)|\).
\item If the restriction of \(A\) to \(\Span(x,t)\) is an isomorphism onto
an oriented Euclidean plane, then
\[
 \left\lvert\frac{d}{ds}\frac{Ax}{\norm{Ax}}\right\rvert
 =\frac{|\det(Ax,At)|}{\norm{Ax}^2}.
\]
\end{enumerate}
\end{lemma}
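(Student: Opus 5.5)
Both parts are direct Jacobian computations. The only structural facts I will use are those of a unit-speed great-circle arc: \(\norm x=\norm t=1\), \(x\perp t\), \(x'=t\), and likewise \(y'=u\) for the second arc.

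For the first assertion, I differentiate \(\Phi(r,s,\rho,a)=rx(s)-\rho y(a)\) in each variable. This gives
\[
 \partial_r\Phi=x,\qquad \partial_s\Phi=rt,\qquad
 \partial_\rho\Phi=-y,\qquad \partial_a\Phi=-\rho u.
\]
The Jacobian determinant is therefore
\[
 \det(x,rt,-y,-\rho u)=r\rho\det(x,t,y,u),
\]
by multilinearity; the two minus signs cancel. Taking absolute values gives \(r\rho|\det(x,t,y,u)|\), using \(r,\rho>0\). The determinant is taken in a \(g\)-orthonormal frame, so it is exactly \(B(z,z')\) of the previous definitions.

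For the second assertion, I write \(v=Ax\), so that \(v'=At\). The quotient rule gives
\[
 \frac{d}{ds}\frac{v}{\norm v}=\frac{v'}{\norm v}-\frac{\ip{v}{v'}\,v}{\norm v^3}.
\]
This is \(\norm v^{-1}\) times the component of \(v'\) orthogonal to \(v\). Because \(A\) maps \(\Span(x,t)\) isomorphically onto an oriented Euclidean plane, \(v\) and \(v'\) are independent vectors in that plane. The length of the orthogonal component of \(v'\) is then \(|\det(v,v')|/\norm v\), since the area of the parallelogram equals base times height. Combining these gives \(|\det(Ax,At)|/\norm{Ax}^2\).

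Neither step is a real obstacle. The only points needing care are that the determinant in the first assertion is computed with respect to the volume-one metric \(g\), and that the hypothesis on \(A\) keeps \(Ax\) away from zero, so the projectivized curve is differentiable.
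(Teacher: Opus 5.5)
Your proof is correct and essentially the paper's own argument. Part 1 is the same determinant of the derivative columns \(x, rt, -y, -\rho u\). In part 2, your quotient-rule and base-times-height computation is simply an explicit derivation of the planar polar-angle identity \(|d\theta/ds|=|\det(v,v')|/\norm{v}^2\) with \(v=Ax\), which the paper cites directly.
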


\begin{proof}
For the first assertion, take the determinant of the derivative columns
\(x,rt,-y,-\rho u\).  The second is the planar polar-coordinate identity
\(|d\theta/ds|=|\det(v,v')|/\norm v^2\), applied to \(v=Ax\).
\end{proof}

\begin{theorem}[Mixed Gaussian--kinematic formula]\label{thm:mixed-gaussian}
For \(F\neq0\), \(G\neq0\), and every volume-one metric \(g\),
\begin{equation}\label{eq:mixed-gaussian}
 \boxed{\deg(F\cdot G)=M_g(F)M_g(G)
 \E_{\nu_{F,g}\otimes\nu_{G,g}}
 \left[B(z,z')^2J(x\cdot y)\right].}
\end{equation}
Moreover, for \(-1\leq c<1\),
\begin{equation}\label{eq:Jlower}
 J(c)\geq J(-1)=\frac13.
\end{equation}
For \(F=0\) or \(G=0\), equation \eqref{eq:mixed-gaussian} carries the
convention that both sides equal zero.
\end{theorem}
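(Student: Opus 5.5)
We will compute \(\deg(F\cdot G)\) via the fan-displacement rule: for a generic vector \(v\),
\[
\deg(F\cdot G)=\sum_{(\sigma,\tau):\,\sigma\cap(\tau+v)\neq\emptyset}w_\sigma v_\tau[N:N_\sigma+N_\tau],
\]
and average this identity over a Gaussian \(v\sim\mathcal N(0,\Id_g)\) for the volume-one metric \(g\). Since the left side does not depend on \(v\), it equals its expectation. The right side is then a sum over pairs \((\sigma,\tau)\) of \(w_\sigma v_\tau[N:N_\sigma+N_\tau]\cdot\Pr(v\in\sigma-\tau)\). Pairs whose planes share a line contribute nothing, because \(\sigma-\tau\) then lies in a proper subspace and has Gaussian measure zero. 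This matches the convention on \(B^2J\).

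\textbf{Step 1: lattice index as a determinant.} For transverse pairs we use \([N:N_\sigma+N_\tau]=|p_\sigma\wedge p_\tau|\) in \(\bigwedge^4N\simeq\Z\). With \(\covol_g(N)=1\), this equals \(\norm{p_\sigma}_g\norm{p_\tau}_g|\det_g(x,t,y,u)|\) for unit bivectors \(z=x\wedge t\) and \(z'=y\wedge u\) as in \eqref{eq:unitbivector}. The determinant is constant on the facet pair.

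\textbf{Step 2: Gaussian probability via Lemma~\ref{lem:radial-jacobians}(1).} Parametrize \(\sigma-\tau\) by \((r,s,\rho,a)\mapsto rx(s)-\rho y(a)\). For a transverse pair this map is injective onto \(\sigma-\tau\), since the decomposition is unique. The lemma then gives
\[
\Pr(v\in\sigma-\tau)=(2\pi)^{-2}\int |B|\,r\rho\, e^{-\norm{rx-\rho y}^2/2}\,dr\,d\rho\,ds\,da .
\]
Because \(\norm{rx-\rho y}^2=r^2+\rho^2-2cr\rho\) with \(c=x\cdot y\), the radial integral is exactly \(J(x\cdot y)\). Multiplying by the index from Step 1 gives
\[
\frac{w_\sigma\norm{p_\sigma}}{2\pi}\,\frac{v_\tau\norm{p_\tau}}{2\pi}\int B^2 J(x\cdot y)\,ds\,da.
\]
By \eqref{eq:linkmeasure} this is \(\int B^2J\,d\mu_{F,g}\,d\mu_{G,g}\) over the pair of arcs. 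Summing over all pairs and normalizing by the total masses yields \eqref{eq:mixed-gaussian}.

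\textbf{Main obstacle.} The delicate point is justifying that the averaged count is legitimate. First, the displacement rule must hold for almost every \(v\), namely for all \(v\) outside a finite union of proper subspaces and boundaries; this is measure zero, so the expectation is well defined. Second, \(J(c)\) blows up as \(c\to1\), so we need \(B^2J\) integrable. This follows because each probability is at most \(1\) and there are finitely many pairs, so Tonelli applies. We also need to check the orientation and sign conventions, namely that \(|B|\) appears once from the Jacobian and once from the index, giving \(B^2\).

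\textbf{The bound \eqref{eq:Jlower}.} Differentiating under the integral,
\[
J'(c)=\int r^2\rho^2e^{-(\cdots)/2}\,dr\,d\rho>0,
\]
so \(J\) is increasing on \([-1,1)\). At \(c=-1\) the exponent is \(-(r+\rho)^2/2\). Pass to polar-type coordinates \(s=r+\rho\), \(r=s\lambda\), \(\rho=s(1-\lambda)\), whose Jacobian is \(s\):
\[
J(-1)=\int_0^1\lambda(1-\lambda)\,d\lambda\int_0^\infty s^3e^{-s^2/2}\,ds=\frac16\cdot2=\frac13 .
\]
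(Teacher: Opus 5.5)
Your proposal is correct and follows the paper's proof essentially step for step. The shared steps are: averaging the fan-displacement rule over a Gaussian (nontransverse difference cones are null), the radial Jacobian \(r\rho|B|\), the covolume-one index identity \([N:N_\sigma+N_\tau]=\norm{p_\sigma}\norm{p_\tau}|B|\), and the substitution \(s=r+\rho\) giving \(J(-1)=1/3\). The only difference is cosmetic: the paper gets monotonicity of \(J\) from pointwise monotonicity of the integrand in \(c\), which avoids justifying \(J'(c)>0\) by differentiation under the integral sign.
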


\begin{proof}
For transverse facets \(\sigma,\tau\), the map
\[
 (r,s,\rho,a)\longmapsto rx(s)-\rho y(a),\qquad r,\rho>0,
\]
parametrizes \(\relint(\sigma-\tau)\) and, by
Lemma~\ref{lem:radial-jacobians}, has absolute Jacobian \(r\rho|B|\).
For a fixed pair of oriented facet planes, the unit bivectors \(z,z'\), and
hence \(B\), are constant along their link arcs.  Therefore a standard
Gaussian displacement \(\delta\)
belongs to this difference cone with probability
\[
 \frac{|B|}{(2\pi)^2}
 \int_{I_\sigma\times I_\tau}J(x\cdot y)\,ds\,da.
\]
Because the lattice covolume is one, the index of a transverse pair is
\[
 [N:N_\sigma+N_\tau]
 =\norm{p_\sigma}\norm{p_\tau}|B|.
\]
The fan-displacement rule says that, for generic \(\delta\),
\[
 \deg(F\cdot G)=
 \sum_{\substack{\sigma\in F,\ \tau\in G\\
 \delta\in\relint(\sigma-\tau)}}
 w_\sigma v_\tau[N:N_\sigma+N_\tau].
\]
The displacement rule is formulated for small generic \(\delta\).  Since
each difference cone \(\sigma-\tau\) is conical, the contributing pair list
is invariant under positive rescaling; hence the formula may be averaged over
an unrestricted Gaussian.  Nontransverse difference cones have Gaussian
measure zero.  For \(F=G\), ordered facet pairs agree with the ordered
fan-displacement convention.  Combining the Gaussian probability with the
lattice index and \eqref{eq:linkmeasure} proves
\eqref{eq:mixed-gaussian} \cite{FultonSturmfels,JensenYu}.

For \(r,\rho>0\), the integrand in \eqref{eq:J} is pointwise strictly
increasing in \(c\), through the factor \(e^{cr\rho}\).  Hence \(J\) is
strictly increasing on \([-1,1)\).  With \(s=r+\rho\) and \(t=r/s\),
\[
 J(-1)=\int_{r,\rho>0}r\rho e^{-(r+\rho)^2/2}\,dr\,d\rho
 =\frac16\int_0^\infty s^3e^{-s^2/2}\,ds=\frac13.
\]
\end{proof}

For a metric of arbitrary lattice covolume \(D\), the right side of
\eqref{eq:mixed-gaussian} carries the additional factor \(D^{-1}\).
The covolume-one normalization is therefore essential.

\section{Affine normalization}

The affine-normalization step for \(F\) uses only positivity.  Balancing
enters later, when tangent isotropy is converted into plane isotropy.

Let \(\mathcal P=SL_4(\R)/SO(4)\) be the space of volume-one metrics.  We
normalize its invariant Riemannian metric so that, in a \(g\)-orthonormal
frame, the curve \(g_\eta=(e^{\eta H})^*g\) has initial speed
\(\norm H_{\mathrm{HS}}\) for symmetric trace-free \(H\).

\begin{lemma}[Approximate critical points]\label{lem:approx-critical}
Let \(\mathcal X\) be a proper Hadamard manifold.  If
\(f:\mathcal X\to(0,\infty)\) is \(C^1\), there are \(x_k\in\mathcal X\)
such that
\[
 f(x_k)\longrightarrow\inf_{\mathcal X}f,\qquad
 \norm{\nabla f(x_k)}\longrightarrow0.
\]
\end{lemma}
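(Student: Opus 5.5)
This is Ekeland's variational principle applied on a complete metric space, with the Hadamard structure used only to turn Ekeland's inequality into a gradient bound.

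\textbf{Setup.} A proper Hadamard manifold \(\mathcal X\) is a complete Riemannian manifold. With its Riemannian distance \(d\) it is therefore a complete metric space. The function \(f\) is continuous and bounded below by \(0\), so \(\inf f\geq0\) is finite.

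\textbf{Applying Ekeland.} Choose \(\varepsilon_k\downarrow0\) and points \(y_k\) with \(f(y_k)\leq\inf f+\varepsilon_k^2\). Ekeland's principle, with parameter \(\lambda=\varepsilon_k\), gives points \(x_k\) satisfying
\[
 f(x_k)\leq f(y_k),\qquad d(x_k,y_k)\leq\varepsilon_k,
\]
and the strict inequality
\[
 f(z)>f(x_k)-\varepsilon_k\,d(z,x_k)\qquad\text{for all } z\neq x_k.
\]
The first inequality immediately gives \(f(x_k)\to\inf f\).

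\textbf{Gradient bound.} Fix a unit vector \(v\in T_{x_k}\mathcal X\) and set \(z=\exp_{x_k}(\eta v)\) for small \(\eta>0\).
\begin{itemize}
 \item Geodesics are distance-minimizing on a Hadamard manifold, so \(d(z,x_k)=\eta\). In fact this holds for small \(\eta\) on any Riemannian manifold.
 \item Substituting into the strict inequality gives \(\bigl(f(\exp_{x_k}(\eta v))-f(x_k)\bigr)/\eta\geq-\varepsilon_k\).
 \item Letting \(\eta\to0^+\) and using that \(f\) is \(C^1\), we get \(\langle\nabla f(x_k),v\rangle\geq-\varepsilon_k\).
 \item Taking \(v=-\nabla f(x_k)/\norm{\nabla f(x_k)}\) when the gradient is nonzero yields \(\norm{\nabla f(x_k)}\leq\varepsilon_k\to0\).
\end{itemize}
This completes the argument.

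\textbf{Where care is needed.} The only delicate point is the completeness hypothesis required by Ekeland's principle. Here it is supplied by properness, and for \(\mathcal P=SL_4(\R)/SO(4)\) it follows because this is a complete symmetric space.

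The lemma gives no compactness of the sequence \(x_k\), and none is needed. This matters for the later application: a minimizing sequence for \(M_g\) may escape to infinity in \(\mathcal P\), which is why the lemma is phrased with approximate rather than exact critical points.
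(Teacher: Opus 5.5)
Your argument is correct, but it takes a different route from the paper.

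\textbf{The paper's route.} The paper does not invoke Ekeland's principle. It fixes a base point $x_0$ and minimizes the penalized function $f(x)+\varepsilon d(x,x_0)^2$.
\begin{itemize}
\item Properness makes this function coercive, so an exact minimizer $x_\varepsilon$ exists.
\item On a Hadamard manifold $d(\cdot,x_0)^2$ is smooth, with gradient of norm $2d(\cdot,x_0)$. The Euler equation therefore gives $\norm{\nabla f(x_\varepsilon)}=2\varepsilon d(x_\varepsilon,x_0)$.
\item Comparison with $x_0$, together with $f>0$, bounds this quantity by $2\sqrt{\varepsilon f(x_0)}$.
\item Finally, $f(x_\varepsilon)\leq f(y)+\varepsilon d(y,x_0)^2$ for every $y$, which yields $f(x_\varepsilon)\to\inf f$.
\end{itemize}
That argument is self-contained and genuinely uses all three hypotheses: properness for existence of the minimizer, the Hadamard structure for global smoothness of the squared distance, and positivity for the distance bound.

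\textbf{Your route.} You replace the exact penalized minimizer by Ekeland's approximate one. You also replace the smooth quadratic penalty by the Lipschitz penalty $\varepsilon_k d(\cdot,x_k)$, which only needs to be tested along short geodesics; there $d(z,x_k)\leq\eta$ already suffices. As a result your proof needs only completeness of the metric and a lower bound on $f$. It therefore works on any complete Riemannian manifold, and, as you note, the nonpositive-curvature hypothesis is superfluous. It also gives extra information the paper's version lacks: the localization $d(x_k,y_k)\leq\varepsilon_k$ near any prescribed almost-minimizer $y_k$. The cost is that Ekeland's principle is quoted as a black box, whereas the paper's penalization is elementary once properness and the Hadamard geometry are available.
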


\begin{proof}
Fix \(x_0\in\mathcal X\).  For \(\varepsilon>0\), the coercive function
\[
 f(x)+\varepsilon d(x,x_0)^2
\]
attains a minimum \(x_\varepsilon\).  Comparison with \(x_0\), followed by
the Euler equation, gives
\[
 \norm{\nabla f(x_\varepsilon)}
 =2\varepsilon d(x_\varepsilon,x_0)
 \leq2\sqrt{\varepsilon f(x_0)}.
\]
For each fixed \(y\in\mathcal X\), minimality also gives
\[
 \inf f\leq f(x_\varepsilon)
 \leq f(y)+\varepsilon d(y,x_0)^2.
\]
Letting \(\varepsilon\downarrow0\) and then taking the infimum over \(y\)
proves the assertion.
\end{proof}

For the fixed pointed subdivision, \(g\mapsto M_g(F)\) is smooth: each facet
contributes a bivector norm times a spherical arc length, and both depend
smoothly on \(g\).  Hence Lemma~\ref{lem:approx-critical} applies to the mass.

\begin{theorem}[Almost-isotropic affine position]\label{thm:isotropic}
If \(M_*(F)>0\), there are volume-one metrics \(g_k\) such that
\begin{equation}\label{eq:isotropic}
 M_{g_k}(F)\longrightarrow M_*(F),\qquad
 \norm{T_{g_k}-\frac14\Id_4}_{\mathrm{HS},g_k}\longrightarrow0.
\end{equation}
If the infimum is attained, every minimizing metric is tangent-isotropic.
\end{theorem}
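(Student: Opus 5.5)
The plan is to apply Lemma~\ref{lem:approx-critical} to \(f(g)=M_g(F)\) on \(\mathcal X=\mathcal P=SL_4(\R)/SO(4)\), and then identify the gradient of the mass with the trace-free part of the tangent covariance. The space \(\mathcal P\) is a proper Hadamard manifold. The function \(f\) is \(C^1\), indeed smooth, for a fixed pointed subdivision, and it is positive because \(M_*(F)>0\). The lemma therefore gives volume-one metrics \(g_k\) with \(M_{g_k}(F)\to M_*(F)\) and \(\norm{\nabla f(g_k)}\to0\). Everything then reduces to a first-variation identity at an arbitrary volume-one metric \(g\):
\begin{equation*}
 \frac{d}{d\eta}\Big|_{\eta=0}M_{(e^{\eta H})^*g}(F)
 = c\,M_g(F)\,\ip{H}{T_g-\tfrac14\Id_4}_{\mathrm{HS}}
\end{equation*}
for symmetric trace-free \(H\) in a \(g\)-orthonormal frame, with some universal nonzero constant \(c\). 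I expect \(c=-1\).

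To compute the derivative, I use \eqref{eq:masscovariance}, so that \(M_{(e^{\eta H})^*g}(F)=M_g((e^{\eta H})_\#F)\). The pushed fan has facet densities \((\bigwedge^2e^{\eta H})\xi_\sigma\), and its arcs are the normalized images of the original arcs. I will compute the mass facet by facet. By \eqref{eq:linkmeasure}, a facet with ray generators \(a,b\) has mass proportional to \(w_\sigma\norm{p_\sigma}\) times the angle between \(a\) and \(b\).

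The cleanest route is to avoid differentiating the angle and the bivector norm separately. Instead I integrate along the arc. Lemma~\ref{lem:radial-jacobians}(2) describes the pushed arc length element: after pushing by \(h=e^{\eta H}\), the pushed mass density equals \(w_\sigma\norm{p_\sigma}\,|\det(hx,ht)|_{\text{plane}}/\norm{hx}^2\,ds/2\pi\) times the factor \(\norm{(\bigwedge^2h)z}\). Since \(|\det(hx,ht)|=\norm{hx\wedge ht}=\norm{(\bigwedge^2h)z}\), the total density is
\[
 \frac{w_\sigma\norm{p_\sigma}}{2\pi}\,\frac{\norm{(\bigwedge^2h)z}^2}{\norm{hx}^2}\,ds .
\]
Differentiating at \(\eta=0\) gives \(2\ip{Hx}{x}+2\ip{Ht}{t}\) from the numerator, because \(\frac{d}{d\eta}\norm{(\bigwedge^2h)z}^2=2(\ip{Hx}{x}+\ip{Ht}{t})\). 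The denominator contributes \(-2\ip{Hx}{x}\). The derivative is therefore \(2\ip{Ht}{t}\,d\mu\).

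Summing over facets, the directional derivative equals \(2M_g(F)\tr(HT_g)=2M_g(F)\ip{H}{T_g-\frac14\Id}\), using \(\tr H=0\). The sign differs from my guess, which does not matter. The gradient in \(T_g\mathcal P\), under the stated normalization, is thus \(2M_g(F)(T_g-\frac14\Id_4)\) up to the identification of trace-free symmetric matrices. Note that \(T_g-\frac14\Id\) is trace-free by \eqref{eq:T}. Hence
\[
 \norm{\nabla f(g_k)}=2M_{g_k}(F)\,\norm{T_{g_k}-\tfrac14\Id}.
\]
Since \(M_{g_k}(F)\to M_*(F)>0\), the covariance defect tends to zero, which gives \eqref{eq:isotropic}. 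At an attained minimum the gradient vanishes, and \(M>0\) then forces \(T_g=\frac14\Id\) exactly.

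Balancing, through \eqref{eq:stationarity}, is only needed to equate \(\E(tt^T)\) with \(\E(xx^T)\). The computation above naturally produces \(\E(tt^T)\), so it is consistent with the definition of \(T_g\) either way.

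The main obstacle is getting the first-variation formula right. Two points need care there. The arc endpoints move under \(h\), but this is handled by pulling back the integral to the fixed parameter \(s\), with the reparametrization captured by Lemma~\ref{lem:radial-jacobians}(2). Second, the normalization of the Riemannian metric on \(\mathcal P\) must be matched so that the gradient norm is exactly the Hilbert--Schmidt norm of the derivative functional. Only proportionality matters for the limit, so that constant is harmless.
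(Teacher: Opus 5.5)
Your proposal is correct and follows the paper's proof: you obtain the same multiplicative density factor \(\norm{\bigwedge^2h_\eta z}^{2}/\norm{h_\eta x}^{2}\), the same first variation \(DM_g(H)=2M_g(F)\tr(HT_g)\) with gradient \(2M_g(F)\bigl(T_g-\frac14\Id_4\bigr)\), and then apply Lemma~\ref{lem:approx-critical} exactly as the paper does. The only difference is cosmetic: you derive the density factor via \eqref{eq:masscovariance} and Lemma~\ref{lem:radial-jacobians}(2), whereas the paper identifies the two spheres by radial projection.
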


\begin{proof}
Represent a metric variation by \(g_\eta=h_\eta^*g\), where
\(h_\eta=e^{\eta H}\) and \(H\) is symmetric and trace-free in a
\(g\)-orthonormal frame.  Identify the \(g_\eta\)-sphere with the \(g\)-sphere
by radial projection.  The mass element at \(z=x\wedge t\) is then multiplied
by
\[
 \frac{\norm{\bigwedge^2h_\eta z}^{\,2}}
      {\norm{h_\eta x}^{\,2}}.
\]
Differentiating its logarithm at \(\eta=0\) gives
\(2\ip{Ht}{t}\); the radial terms cancel.  Thus
\begin{equation}\label{eq:firstvariation}
 DM_g(H)=2M_g(F)\tr(HT_g).
\end{equation}
With the normalization above,
\[
 \nabla\log M_g=2\left(T_g-\frac14\Id_4\right).
\]
Apply Lemma~\ref{lem:approx-critical} to \(f(g)=M_g(F)\).  Since the limiting
mass is positive, the displayed logarithmic gradient forces
\(\norm{T_{g_k}-\Id_4/4}_{\mathrm{HS},g_k}\to0\).
At an actual minimizer the same first variation vanishes identically.
\end{proof}

Along the sequence, average tangent mass becomes equidistributed among
ambient directions, although individual facets may remain concentrated.
For a balanced fan, \eqref{eq:stationarity} also gives, for
\(\Pi_z=xx^T+tt^T\),
\begin{equation}\label{eq:planeisotropy}
 \E_g\Pi_z=2T_g.
\end{equation}
The first-variation argument uses positivity; this last identity is the point
at which balancing enters.

\section{The four-dimensional Hodge defect}

We now convert plane isotropy into a quantitative lower bound for average
transversality.

Fix an orientation.  Split
\[
 \bigwedge^2V=\bigwedge^2_+V\oplus\bigwedge^2_-V.
\]
Using the unit spheres in
\(\bigwedge^2_+V\simeq\R^3\) and \(\bigwedge^2_-V\simeq\R^3\), every unit
simple bivector has a unique representation
\begin{equation}\label{eq:XY}
 z=\frac{(X,Y)}{\sqrt2},\qquad X,Y\in S^2.
\end{equation}
There is an \(SO(4)\)-equivariant linear isometry
\[
 \Phi:\R^3\otimes\R^3\longrightarrow\Sym_0(\R^4)
\]
such that
\begin{equation}\label{eq:Phi}
 \Pi_z-\frac12\Id_4=\Phi(XY^T).
\end{equation}
This is the tensor form of the standard identification
\(\Gr^+(2,4)\simeq S^2\times S^2\).

To fix the normalization, define, for \(\omega\in\bigwedge^2V\), \(K_\omega\) by
\(\ip{K_\omega v}{w}=\ip{\omega}{v\wedge w}\).  Unit self-dual and
anti-self-dual forms satisfy
\[
 K_X^2=K_Y^2=-\frac12\Id_4,\qquad K_XK_Y=K_YK_X.
\]
Since \(K_z=(K_X+K_Y)/\sqrt2\) and \(K_z^2=-\Pi_z\),
\[
 \Pi_z-\frac12\Id_4=-K_XK_Y.
\]
Thus \(\Phi(X\otimes Y)=-K_XK_Y\).  This map is \(SO(4)\)-equivariant, and
\(\bigwedge^2_+V\otimes\bigwedge^2_-V\) is irreducible.  Its scale is fixed
by taking
\[
 X=(e_{12}+e_{34})/\sqrt2,\qquad
 Y=(e_{12}-e_{34})/\sqrt2,
\]
for which \(-K_XK_Y=\operatorname{diag}(1,1,-1,-1)/2\) has
Hilbert--Schmidt norm one.  Hence \(\Phi\) is an isometry and
\eqref{eq:Phi} follows.

For a probability law on oriented two-planes set
\begin{equation}\label{eq:PQR}
 P=\E(XX^T),\qquad Q=\E(YY^T),\qquad R=\E(XY^T).
\end{equation}
The matrices \(P,Q\) are positive semidefinite and have trace one.
For the stationary link law of a balanced fan, taking expectations in
\eqref{eq:Phi} and using \eqref{eq:planeisotropy} gives
\begin{equation}\label{eq:Rdefect}
 \norm R=2\norm{T_g-\frac14\Id_4}.
\end{equation}

\begin{theorem}[Exact transversality defect]\label{thm:defect}
For two independent flags with the same law,
\begin{equation}\label{eq:Bmoment}
 \E B(z,z')^2
 =\frac14\left(\tr(P^2)+\tr(Q^2)-2\norm R^2\right).
\end{equation}
Equivalently,
\begin{equation}\label{eq:hodge-defect}
 \E B^2=\frac16+\frac14\left(
 \norm{P-\frac13\Id_3}^2+\norm{Q-\frac13\Id_3}^2
 -2\norm R^2\right).
\end{equation}
For a stationary link this becomes
\begin{equation}\label{eq:defect}
 \boxed{\E B^2=\frac16+
 \frac14\left(
 \norm{P-\frac13\Id_3}^2+\norm{Q-\frac13\Id_3}^2\right)
 -2\norm{T_g-\frac14\Id_4}^2.}
\end{equation}
In particular,
\[
 \E B^2\geq\frac16-2\norm{T_g-\frac14\Id_4}^2.
\]
For a stationary link with \(T_g=\Id_4/4\), one has \(\E B^2\geq1/6\),
with equality exactly when
\[
 P=Q=\frac13\Id_3.
\]
\end{theorem}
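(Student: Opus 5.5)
The plan is to express \(B(z,z')=\det_g(x,t,y,u)\) through the wedge pairing of the two unit bivectors, and then square and average. For unit simple bivectors we have \(z\wedge z'=B(z,z')\,\mathrm{vol}\). Equivalently, \(B=\ip{z}{\star z'}\), where \(\star\) is the Hodge star. On \(\bigwedge^2_\pm V\) the star acts by \(\pm1\). So if we write \(z=(X,Y)/\sqrt2\) and \(z'=(X',Y')/\sqrt2\) as in \eqref{eq:XY}, then \(\star z'=(X',-Y')/\sqrt2\), and
\[
 B(z,z')=\tfrac12\bigl(\ip{X}{X'}-\ip{Y}{Y'}\bigr).
\]
The sign depends on the orientation convention but disappears after squaring. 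Squaring gives
\[
 B^2=\tfrac14\bigl(\ip{X}{X'}^2+\ip{Y}{Y'}^2-2\ip{X}{X'}\ip{Y}{Y'}\bigr).
\]

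Next we take expectations over independent \(z,z'\) with the same law. We have \(\E\ip{X}{X'}^2=\tr(\E XX^T\,\E X'X'^T)=\tr(P^2)\), and likewise \(\E\ip{Y}{Y'}^2=\tr(Q^2)\). For the cross term, \(\ip{X}{X'}\ip{Y}{Y'}=\ip{XY^T}{X'Y'^T}_{\mathrm{HS}}\). Its expectation is therefore \(\ip{R}{R}=\norm R^2\). This gives \eqref{eq:Bmoment}.

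For \eqref{eq:hodge-defect} we use \(\tr P=\tr Q=1\). This gives \(\tr(P^2)=\norm{P-\frac13\Id_3}^2+\frac13\), and similarly for \(Q\), so the constant term is \(\frac14\cdot\frac23=\frac16\). For the stationary case we substitute \eqref{eq:Rdefect}: the relation \(\norm R^2=4\norm{T_g-\frac14\Id_4}^2\) turns \(-\frac12\norm R^2\) into \(-2\norm{T_g-\frac14\Id_4}^2\), which is \eqref{eq:defect}. Dropping the nonnegative \(P\)- and \(Q\)-defects gives the inequality. When \(T_g=\Id_4/4\), we get \(\E B^2\geq\frac16\), with equality exactly when both HS defects vanish, that is, when \(P=Q=\frac13\Id_3\).

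The main obstacle is normalization. We must check that \(B=\ip{z}{\star z'}\) exactly, with no stray factor. We must also check that the splitting \eqref{eq:XY}, with \(X,Y\) unit vectors in the unit spheres of \(\bigwedge^2_\pm V\), is compatible with the inner product on \(\bigwedge^2V\) used for \(\norm{p_\sigma}_g\). I would verify both on the example \(z=e_{12}\), \(z'=e_{34}\). Here \(X=X'=(e_{12}+e_{34})/\sqrt2\), while \(Y=(e_{12}-e_{34})/\sqrt2=-Y'\). The formula then gives \(B=\frac12(1-(-1))=1=\det(e_1,e_2,e_3,e_4)\), as required. Self-dual and anti-self-dual forms are orthogonal, so no mixed terms arise in the pairing.
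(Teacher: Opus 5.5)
Your proposal is correct and takes the same route as the paper. You write $B=\tfrac12(X\cdot X'-Y\cdot Y')$, expand the square using independence to get $\tr(P^2)+\tr(Q^2)-2\norm R^2$, use $\tr P=\tr Q=1$ to produce the constant $\tfrac16$, and substitute \eqref{eq:Rdefect} for the stationary case; your only addition is deriving the formula for $B$ from $z\wedge z'=\ip{z}{\star z'}\,\mathrm{vol}$ and the $\pm1$ eigenspaces of $\star$ (with a correct check on $e_{12},e_{34}$), a formula the paper simply asserts.
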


\begin{proof}
For two oriented planes represented as in \eqref{eq:XY},
\begin{equation}\label{eq:BXY}
 B(z,z')=\frac{X\cdot X'-Y\cdot Y'}2.
\end{equation}
Expanding the square and using independence gives \eqref{eq:Bmoment}.  The
identity
\[
 \tr(P^2)=\frac13+\norm{P-\frac13\Id_3}^2
\]
and its analogue for \(Q\) give \eqref{eq:hodge-defect}.  Substituting
\eqref{eq:Rdefect} gives \eqref{eq:defect}, including the lower bound and its
equality criterion.
\end{proof}

For a stationary link, the defect identity gives a quantitative stability
statement.  If
\[
 \delta=\norm{T_g-\frac14\Id_4},\qquad
 \E B^2\leq\frac16+\eta,
\]
then
\begin{equation}\label{eq:defect-stability}
 \norm{P-\frac13\Id_3}^2+\norm{Q-\frac13\Id_3}^2
 \leq4\eta+8\delta^2.
\end{equation}
Thus, along an almost-isotropic sequence of stationary links, approaching the
lower transversality value \(1/6\) is equivalent to both Hodge marginals
becoming second-moment isotropic.

The constant \(1/6\) in the isotropic stationary-link inequality is sharp
among effective balanced rational fans.
Take the sum of the six complete coordinate planes
\[
 F_{\mathrm{coord}}=\sum_{1\leq i<j\leq4}
 [\Span(e_i,e_j)]
\]
in the standard metric.  Each direction occurs in three planes, so
\(T=\Id_4/4\).  Of the \(36\) ordered pairs of coordinate planes, exactly
six are complementary and have \(B^2=1\); the remaining thirty have
\(B=0\).  Consequently \(\E B^2=1/6\), so the value \(1/6\) is attained.

\section{Projection degrees and Rankin's theorem}

It remains to compare affine mass with the least lattice-quotient degree.

Let \(A:N\twoheadrightarrow\Z^2\), and let
\(\alpha\in\bigwedge^2N^*\) be a primitive generator of its saturated row
lattice.  Give the target a volume-one metric.  Tropical push-forward
multiplicities, followed by angular averaging in the target, give
\begin{equation}\label{eq:degree}
 d_A(F)=\frac1{2\pi}
 \sum_{\rank(A|_{\Span\sigma})=2}
 w_\sigma|\alpha(p_\sigma)|\,\operatorname{angle}(A\sigma).
\end{equation}
Choose the target metric so that the two nonzero singular values of \(A\)
are equal.  If \(S_A=A^*A\), then
\begin{equation}\label{eq:S}
 S_A=a\Pi_W,\qquad W=(\ker A)^{\perp_g},\qquad
 a=\norm\alpha_g.
\end{equation}
Thus \(W\subset V\) is the \(g\)-dual of the row plane in \(V^*\).
Indeed, if \(G\) is the Gram matrix of the two rows in the source dual
metric, the target Gram matrix \(aG^{-1}\) has determinant one and produces
\eqref{eq:S}.

On a link arc put
\[
 D=|\det(Ax,At)|=\frac{|\alpha(p_\sigma)|}{\norm{p_\sigma}}.
\]
Lemma~\ref{lem:radial-jacobians} gives angular speed
\(D/\norm{Ax}^2\), while
\[
 D^2\leq\norm{Ax}^2\norm{At}^2.
\]
Substituting this angular speed into \eqref{eq:degree} gives the exact identity
\begin{equation}\label{eq:projection-exact}
 d_A(F)=\int\frac{D^2}{\norm{Ax}^2}\,d\mu_{F,g},
\end{equation}
with zero contribution from rank-deficient facets.  Applying the displayed
pointwise inequality yields the basic estimate
\begin{equation}\label{eq:projection}
 \boxed{d_A(F)\leq M_g(F)\tr(S_AT_g).}
\end{equation}

Rankin's four-dimensional two-plane theorem says that every volume-one
rank-four Euclidean lattice contains independent \(u,v\) with
\begin{equation}\label{eq:Rankin}
 \norm{u\wedge v}^2\leq\gamma_{4,2}=\frac32
\end{equation}
\cite{Rankin,Watanabe,Sawatani}.  Apply this to the dual lattice and
saturate \(\Z u+\Z v\).  Saturation weakly decreases covolume, so the
primitive generator \(\alpha\) of the saturated exterior square satisfies
\[
 \norm\alpha\leq\sqrt{\frac32}.
\]
The saturated row lattice therefore defines a surjection
\(N\twoheadrightarrow\Z^2\).

Since \(S_A=a\Pi_W\) and \(\tr T_g=1\), \eqref{eq:projection} and Rankin's
bound give, for every volume-one metric,
\begin{equation}\label{eq:coarse}
 m(F)\leq\sqrt{\frac32}\,M_g(F).
\end{equation}
Now suppose \(M_*(F)>0\) and use Theorem~\ref{thm:isotropic}.  The Rankin
quotient may depend on \(k\); write \(W_k\subset V\) for the \(g_k\)-dual
of its row plane,
\(\alpha_k\) for its primitive row bivector, and
\(a_k=\norm{\alpha_k}_{g_k}\).  Rankin gives
\(a_k\leq\sqrt{3/2}\), while
\(\norm{\Pi_{W_k}}_{\mathrm{HS}}=\sqrt2\); therefore
\[
 \begin{aligned}
 m(F)&\leq a_kM_{g_k}(F)\tr(\Pi_{W_k}T_{g_k})\\
 &\leq\sqrt{\frac32}\,M_{g_k}(F)
 \left(\frac12+\sqrt2
 \norm{T_{g_k}-\frac14\Id_4}_{\mathrm{HS}}\right).
 \end{aligned}
\]
Letting \(k\to\infty\) proves
\begin{equation}\label{eq:massprojection}
 \boxed{m(F)\leq\sqrt{\frac38}\,M_*(F).}
\end{equation}
If \(M_*(F)=0\), the coarse estimate already gives \(m(F)=0\), so
\eqref{eq:massprojection} holds in every case.

\section{The mass sandwich and the universal bound}

\begin{theorem}[Affine mass comparison]\label{thm:mass-sandwich}
Every effective balanced rational two-fan in a rank-four lattice satisfies
\begin{equation}\label{eq:mass-sandwich}
 \boxed{\sqrt{\frac83}\,m(F)\leq M_*(F)
 \leq3\sqrt{2q(F)}.}
\end{equation}
Consequently,
\begin{equation}\label{eq:main}
 \boxed{q(F)\geq\frac4{27}m(F)^2.}
\end{equation}
\end{theorem}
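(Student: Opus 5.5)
The left inequality is exactly \eqref{eq:massprojection}, rearranged as \(\sqrt{8/3}\,m(F)\leq M_*(F)\); it holds in all cases, including \(M_*(F)=0\).

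For the right inequality, first dispose of the degenerate case \(M_*(F)=0\), where there is nothing to prove since \(q(F)\geq0\). Now assume \(M_*(F)>0\) and take the almost-isotropic sequence \(g_k\) of Theorem~\ref{thm:isotropic}. Apply Theorem~\ref{thm:mixed-gaussian} with \(G=F\) and the metric \(g_k\). Since \(J\geq 1/3\) on \([-1,1)\) and the integrand vanishes by convention when the facet planes share a line (where \(B=0\) anyway), we get
\[
 q(F)=M_{g_k}(F)^2\,\E\bigl[B^2J(x\cdot y)\bigr]\geq\frac13 M_{g_k}(F)^2\,\E B^2 .
\]
The link of a balanced fan is stationary, so Theorem~\ref{thm:defect} gives \(\E B^2\geq \frac16-2\norm{T_{g_k}-\frac14\Id_4}^2\). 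Hence
\[
 q(F)\geq \frac13 M_{g_k}(F)^2\Bigl(\frac16-2\norm{T_{g_k}-\tfrac14\Id_4}^2\Bigr).
\]
Letting \(k\to\infty\), with \(M_{g_k}(F)\to M_*(F)\) and the defect tending to \(0\), yields \(q(F)\geq M_*(F)^2/18\), that is, \(M_*(F)\leq 3\sqrt{2q(F)}\).

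The consequence follows by chaining the two halves: \(\frac83 m(F)^2\leq M_*(F)^2\leq 18\,q(F)\), so \(q(F)\geq \frac{8}{3\cdot18}m(F)^2=\frac4{27}m(F)^2\).

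The one point needing care is the use of \eqref{eq:defect} for \(F=G\). The defect identity requires two independent flags with the same law, which is exactly the product law \(\nu_{F,g}\otimes\nu_{F,g}\) appearing in \eqref{eq:mixed-gaussian}. It also requires \(P,Q,R\) to be computed for the stationary link law, which is where balancing enters through \eqref{eq:planeisotropy}. Both hypotheses are satisfied here, so I expect no real obstacle beyond checking these identifications. The limit step is harmless because all quantities are continuous along the sequence and \(M_{g_k}(F)\) stays bounded.
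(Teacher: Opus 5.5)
Your proposal is correct and follows the paper's proof essentially step for step. The left inequality is \eqref{eq:massprojection}. The right inequality comes from the self-case of Theorem~\ref{thm:mixed-gaussian}, using \(J\geq 1/3\) and the stationary defect bound \(\E B^2\geq \frac16-2\norm{T_{g_k}-\frac14\Id_4}^2\) along the almost-isotropic sequence of Theorem~\ref{thm:isotropic}, and the two halves are then chained to get \(\frac4{27}\).
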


\begin{proof}
The first inequality is \eqref{eq:massprojection}.  The second is immediate
when \(M_*(F)=0\).  For \(M_*(F)>0\), choose the almost-isotropic metrics of
Theorem~\ref{thm:isotropic}.  The self-case of
Theorem~\ref{thm:mixed-gaussian}, \eqref{eq:Jlower}, and
Theorem~\ref{thm:defect} give, with
\(\delta_k=\norm{T_{g_k}-\Id_4/4}_{\mathrm{HS}}\),
\[
 q(F)\geq\frac13M_{g_k}(F)^2
 \left(\frac16-2\delta_k^2\right).
\]
Letting \(k\to\infty\) gives the second inequality in
\eqref{eq:mass-sandwich}.  Combining the two sides gives
\[
 q(F)\geq\frac1{18}\cdot\frac83m(F)^2
 =\frac4{27}m(F)^2.
\]
\end{proof}

The proof also gives the useful implications
\[
 q(F)=0\Longrightarrow M_*(F)=0\Longrightarrow m(F)=0.
\]
The complementary-plane example realizes \(m(F)=0\) and \(M_*(F)=2\),
thereby separating the latter two invariants.

\section{Equality, stability, and small self-intersection}

The constant \(4/27\) combines three separately sharp numerical inputs---the
endpoint \(J(-1)=1/3\), the Hodge moment \(1/6\), and Rankin's
\(\gamma_{4,2}=3/2\)---with the pointwise projection estimate
\eqref{eq:projection}.  Simultaneous equality imposes an additional
compatibility problem.  Thus the proof establishes \(4/27\); determining the
optimal value remains open.

\begin{proposition}[Strictness at an attained positive minimizer]
\label{prop:strict-attained}
If \(m(F)>0\) and \(M_*(F)\) is attained by a volume-one metric, then
\[
 q(F)>\frac4{27}m(F)^2.
\]
\end{proposition}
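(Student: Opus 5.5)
At an attained minimizer \(g\) we have exact isotropy, so every inequality in the chain of Theorem~\ref{thm:mass-sandwich} is evaluated at the single metric \(g\). The plan is to show that at least one of the three estimates is strict there. Since \(m(F)>0\), we have \(M_*(F)>0\), and by Theorem~\ref{thm:isotropic} the minimizer satisfies \(T_g=\Id_4/4\). Hence Theorem~\ref{thm:mixed-gaussian} and Theorem~\ref{thm:defect} give directly
\[
 q(F)=M_*(F)^2\,\E\bigl[B^2J(x\cdot y)\bigr]\geq\frac13M_*(F)^2\,\E B^2\geq\frac1{18}M_*(F)^2 .
\]
Also, \eqref{eq:projection} with the Rankin quotient at the same \(g\) gives \(m(F)\leq\sqrt{3/8}\,M_*(F)\), with \(\tr(\Pi_WT_g)=1/2\) exactly. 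So it suffices to show that either the \(J\)-step or the projection step is strict.

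\textbf{The \(J\)-step.} Equality \(\E[B^2J]=\frac13\E B^2\) requires \(J(x\cdot y)=1/3\) almost everywhere on the set where \(B\neq0\). Because \(J\) is strictly increasing, this means \(x\cdot y=-1\) for \(\nu\otimes\nu\)-a.e.\ transverse pair of flags. Such pairs do exist, since \(\E B^2\geq1/6>0\). Now fix a transverse pair of facets \(\sigma,\tau\). The points \(x\) range over a nondegenerate arc \(I_\sigma\) of positive length, while \(y\) ranges over \(I_\tau\). The condition \(y=-x\) on a set of positive product measure is impossible: for fixed \(y\) it holds for at most one \(x\). Hence the first inequality is strict, and \(q(F)>\frac1{18}M_*(F)^2\).

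Combining this with \(M_*(F)\geq\sqrt{8/3}\,m(F)>0\) gives \(q(F)>\frac1{18}\cdot\frac83m(F)^2=\frac4{27}m(F)^2\). The strictness from \(J\) alone already suffices, so we never need to analyse equality in the pointwise projection bound \(D^2\leq\norm{Ax}^2\norm{At}^2\) or in Rankin's inequality.

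The main point needing care is the measure-theoretic statement that \(\{x\cdot y=-1\}\) is null in \(I_\sigma\times I_\tau\). This follows because the arcs carry arc-length densities, so the product law is absolutely continuous on a two-dimensional set, whereas the antipodal locus is at most one point per fibre. We must also make sure the convention that \(B^2J=0\) on nontransverse pairs does not interfere. It does not: we only need some transverse pair with positive weight, which exists because \(\E B^2>0\).
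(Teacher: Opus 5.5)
Your proof is correct and is essentially the paper's argument: at the attained minimizer, exact isotropy gives \(\E B^2\geq 1/6\), so transverse pairs carry positive measure, and strict monotonicity of \(J\) makes the \(J\)-step strict before you combine with \eqref{eq:massprojection}. The one difference is that the paper observes \(x\cdot y>-1\) holds pointwise on every transverse pair, since \(y=-x\) would give the two facet planes a common line; this makes your Fubini null-set argument unnecessary, though that argument is also valid.
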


\begin{proof}
At a minimizing metric, Theorem~\ref{thm:isotropic} gives \(T=\Id_4/4\),
and Theorem~\ref{thm:defect} gives \(\E B^2\geq1/6\).  Thus transverse
pairs have positive link-product measure.  On every such pair
\(x\cdot y>-1\), and \(J(x\cdot y)>J(-1)=1/3\).  The Gaussian formula
therefore gives \(q(F)>M_*(F)^2/18\).  Combining this strict inequality with
\eqref{eq:massprojection} proves the claim.
\end{proof}

Accordingly, equality in \eqref{eq:main} with \(m(F)>0\) forces nonattainment
of \(M_*(F)\), and every minimizing sequence leaves each compact subset of
\(\mathcal P\).  The preceding inequalities also give a precise test for near
extremizers.  Let \(F_k\) satisfy \(m(F_k)>0\) and
\[
 \frac{q(F_k)}{m(F_k)^2}\longrightarrow\frac4{27}.
\]
The mass sandwich forces
\[
 \frac{M_*(F_k)}{m(F_k)}\longrightarrow\sqrt{\frac83},
 \qquad
 \frac{q(F_k)}{M_*(F_k)^2}\longrightarrow\frac1{18}.
\]
Choose, by a diagonal application of Theorem~\ref{thm:isotropic}, metrics
\(g_k\) such that
\[
 \frac{M_{g_k}(F_k)}{M_*(F_k)}\longrightarrow1,
 \qquad
 \norm{T_k-\Id_4/4}_{\mathrm{HS},g_k}\longrightarrow0,
\]
and choose the Rankin quotient \(A_k\) used in the projection proof.  Write
\(P_k,Q_k\) for the remaining moments, \(\alpha_k\) for the primitive row
bivector, \(W_k\) for its dual plane, and \(\E_k\) for expectation at \(g_k\).
The chains of inequalities above then force:
\begin{enumerate}
\item \(\E_k B^2\to1/6\), hence
\[
 \norm{P_k-\Id_3/3}_{\mathrm{HS},g_k}^2+
 \norm{Q_k-\Id_3/3}_{\mathrm{HS},g_k}^2\longrightarrow0
\]
by \eqref{eq:defect-stability};
\item
\[
 \E_k\!\left[B^2\left(J(x\cdot y)-\frac13\right)\right]\to0,
\]
so the \(B^2\)-weighted transverse contribution concentrates toward the
antipodal endpoint of the Gaussian kernel;
\item the Rankin and projection steps become sharp in the integrated sense:
\[
 \norm{\alpha_k}_{g_k}^2\longrightarrow\frac32,qquad
 \frac{d_{A_k}(F_k)}{m(F_k)}\longrightarrow1,qquad
 \frac{d_{A_k}(F_k)}{
 \norm{\alpha_k}_{g_k}M_{g_k}(F_k)
 \tr(\Pi_{W_k}T_k)}\longrightarrow1.
\]
\end{enumerate}
These give necessary second-moment and concentration constraints for near
extremizers.  The full flag law retains additional freedom even when
both Hodge moments tend to \(\Id_3/3\).

For integral fans, \eqref{eq:main} yields the integer bound
\begin{equation}\label{eq:smallq}
 m(F)\leq\left\lfloor\frac{3\sqrt3}{2}\sqrt{q(F)}\right\rfloor.
\end{equation}
Thus \(q=0\) forces \(m=0\), \(q=1\) forces \(m\leq2\), and \(q=2\)
forces \(m\leq3\).  This restricts quotient degrees; classification of the
fan additionally requires hypotheses on realizability and on the chosen
quotient.

The case \(m(F)=0\) admits a direct description.  Because the fan is effective,
\[
 m(F)=0
\]
precisely when some primitive rational two-plane \(K\) meets every facet
plane of \(F\) in positive dimension: take \(K=\ker A\).  Effectivity makes
every full-rank push-forward contribution positive, so \(A_*F=0\) exactly
when \(A\) has rank at most one on every facet.
For \(m=1,2,3\), the collapsed fiber components of a low-degree quotient may
vary freely.  One may add effective balanced components that the same
quotient maps to lower dimension, and this operation preserves its degree.
Thus
transferring classification results from intrinsically degree-one tropical
linear spaces requires additional control of those collapsed components.

\section{Mixed intersection: a sufficient hypothesis and a sharp zero example}

For two nonzero fans, define the moments \(P_F,Q_F,R_F\) and
\(P_G,Q_G,R_G\) of their plane laws by \eqref{eq:PQR}.  The calculation leading to
\eqref{eq:Bmoment} gives
\begin{equation}\label{eq:mixed-moment}
 \E_{F,G}B^2=\frac14\left(
 \tr(P_FP_G)+\tr(Q_FQ_G)-2\ip{R_F}{R_G}\right).
\end{equation}
A mixed lower bound follows when one law is fully Pl\"ucker-isotropic.

\begin{corollary}[Mixed bound under Pl\"ucker isotropy]\label{cor:mixed}
Fix a common volume-one metric.  Suppose that \(F\neq0\) and its plane law is
Pl\"ucker-isotropic:
\[
 \E_F(z\otimes z)=\frac16\Id_{\wedge^2\R^4}.
\]
Equivalently,
\[
 P_F=Q_F=\frac13\Id_3,\qquad R_F=0.
\]
Then, for every effective balanced rational two-fan \(G\) in \(N_\R\),
\begin{equation}\label{eq:mixed-bound}
 \deg(F\cdot G)\geq\frac1{18}M_g(F)M_g(G)
 \geq\frac1{18}M_*(F)M_*(G)
 \geq\frac4{27}m(F)m(G).
\end{equation}
\end{corollary}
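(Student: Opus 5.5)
The argument has three steps: compute the mixed second moment from \eqref{eq:mixed-moment}, feed it into the mixed Gaussian formula, and then pass to affine masses and projection minima.

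\textbf{Step 1 (equivalence of the hypotheses).} Write \(z=(X,Y)/\sqrt2\) as in \eqref{eq:XY}. Then
\[
\E_F(z\otimes z)=\tfrac12\begin{pmatrix}P_F&R_F\\ R_F^T&Q_F\end{pmatrix}.
\]
This block matrix equals \(\frac16\Id_6\) exactly when \(P_F=Q_F=\frac13\Id_3\) and \(R_F=0\).

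\textbf{Step 2 (mixed moment).} Substitute these values into \eqref{eq:mixed-moment}. The term \(\ip{R_F}{R_G}\) vanishes. Since \(\tr P_G=\tr Q_G=1\),
\[
\E_{F,G}B^2=\frac14\cdot\frac13\,(\tr P_G+\tr Q_G)=\frac16 .
\]
Neither stationarity nor isotropy of \(G\) is needed here; only the trace normalization of \(G\)'s Hodge marginals enters. If \(G=0\), every term in \eqref{eq:mixed-bound} is zero, so assume \(G\neq0\).

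\textbf{Step 3 (first inequality).} Apply Theorem~\ref{thm:mixed-gaussian} in the common metric \(g\), together with \(J\geq\frac13\) from \eqref{eq:Jlower}. This needs some care. The convention sets \(B^2J=0\) on nontransverse pairs, and on those pairs \(B=0\) anyway. So pointwise \(B^2J\geq\frac13B^2\), and
\[
\deg(F\cdot G)\geq\tfrac13 M_g(F)M_g(G)\,\E B^2=\tfrac1{18}M_g(F)M_g(G).
\]
The nontransverse pairs therefore cause no difficulty.

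\textbf{Step 4 (remaining two inequalities).} The middle inequality is immediate, because \(M_g\geq M_*\) for every volume-one \(g\) and both factors are nonnegative. The last inequality follows by applying \eqref{eq:massprojection}, in the form \(M_*\geq\sqrt{8/3}\,m\), to each of \(F\) and \(G\). This gives
\[
\tfrac1{18}\cdot\tfrac83=\tfrac4{27}.
\]

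The main obstacle is interpretive rather than computational. The first inequality holds for the specific metric \(g\) in which \(F\) is Plücker-isotropic, and the middle step simply discards the fact that \(g\) need not minimize either mass. The only genuine point to check is Step 2, namely that \eqref{eq:mixed-moment} holds for arbitrary independent laws, which follows from \eqref{eq:BXY} and independence.
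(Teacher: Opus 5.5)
Your proof is correct and follows the paper's argument. The mixed moment \eqref{eq:mixed-moment} gives \(\E_{F,G}B^2=\frac16\) using only \(\tr P_G=\tr Q_G=1\); then Theorem~\ref{thm:mixed-gaussian} with \(J\geq\frac13\), the trivial bound \(M_g\geq M_*\), and \eqref{eq:massprojection} applied to both fans yield the chain, while your block-matrix check of the equivalent hypotheses and your remark on nontransverse pairs only make explicit details the paper leaves implicit.
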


\begin{proof}
If \(G=0\), every term in \eqref{eq:mixed-bound} vanishes.  Assume
\(G\neq0\).  Equation \eqref{eq:mixed-moment} then gives
\(\E_{F,G}B^2=1/6\), independently
of the law of \(G\).  Apply the mixed Gaussian formula and \(J\geq1/3\),
then the mass comparison \eqref{eq:massprojection} to both fans.
\end{proof}

More generally, for nonzero \(F,G\), if \(R_F=0\), then
\[
 \E_{F,G}B^2\geq\frac14\left(
 \lambda_{\min}(P_F)+\lambda_{\min}(Q_F)\right).
\]
Thus a uniform lower bound for
\(\lambda_{\min}(P_F)+\lambda_{\min}(Q_F)\) gives a mixed-intersection bound
through Theorem~\ref{thm:mixed-gaussian}.

\begin{example}[Common isotropy with zero mixed intersection]
\label{ex:mixed-obstruction}
In the standard lattice set
\[
 F=[P_{12}]+[P_{34}],\qquad
 G=[P_{13}]+[P_{24}],
\quad P_{ij}=\Span(e_i,e_j).
\]
Every \(F\)-plane meets every \(G\)-plane in a line, so
\[
 \deg(F\cdot G)=0.
\]
At the same time \(M_*(F)=M_*(G)=2\).  For example,
\[
 \norm{e_1\wedge e_2}\norm{e_3\wedge e_4}\geq
 \norm{e_1\wedge e_2\wedge e_3\wedge e_4}=1,
\]
so the arithmetic--geometric mean inequality gives \(M_g(F)\geq2\);
the standard metric realizes equality, and likewise for \(G\).  That metric
also has
\[
 T_F=T_G=\frac14\Id_4.
\]
Hence simultaneous mass minimization and ambient isotropy permit zero mixed
intersection.  Since mixed degrees are nonnegative, the largest constant
valid in
\[
 \deg(F\cdot G)\geq C M_*(F)M_*(G)
\]
for all pairs is \(C=0\).  Here \(m(F)=m(G)=0\), so the example leaves a
bound formulated with \(m(F)m(G)\) vacuous.
\end{example}

\section{Intersection rings, Hodge theory, and matroidal fans}

If \(K_A=\ker(A)\) is given its primitive complete-plane weight, then the
projection degree has the intersection-theoretic interpretation
\begin{equation}\label{eq:kernel-pairing}
 d_A(F)=\deg(F\cdot[K_A]).
\end{equation}
Indeed, the local index in a generic intersection with a translate of
\(K_A\) is the same quotient-lattice index that appears in the tropical
push-forward.  Thus \(q(F)\) and every \(d_A(F)\) are intersection numbers in
the stable-intersection ring; the affine mass \(M_*(F)\) mediates the
comparison between \(q(F)\) and \(m(F)=\min_A d_A(F)\).

Fulton and Sturmfels identify products of toric Minkowski weights by a
lattice displacement rule \cite{FultonSturmfels}; Jensen and Yu identify the
rational tropical fan-cycle ring with McMullen's polytope algebra
\cite{JensenYu,McMullen}.  These identifications encode the products
algebraically.  The quantitative estimate uses positivity of the spherical
link measure together with affine minimization.  These inputs supplement the
algebraic descriptions above: \([K_A]\) is a codimension-two linear cycle,
standard Hodge-index inequalities concern ample divisor classes, and the
rational polytope-algebra decomposition allows coefficients of both signs.

Amini and Piquerez prove Hodge--Riemann relations and Hard Lefschetz for the
classes of tropical and matroidal fans treated in \cite{AminiPiquerez}.
Wood extends the
Maxim--Sch\"urmann intersection-cohomology signature formula to arbitrary
convex polytopes in the BBFK framework and discusses a corresponding
Hodge-index statement \cite{Wood}.  Within their respective settings, these
results determine signatures and establish nondegeneracy properties of the
intersection pairing.  Affine normalization and Rankin's theorem supply the
arithmetic ingredient in comparing the minimum of
\(\deg(F\cdot[K_A])\) over primitive kernel planes with \(q(F)\).

For cycles contained in a Bergman fan, Shaw constructs an intersection
product using matroidal modifications \cite{ShawMatroidal}.  For a chosen
integral embedding, \(M_g\) and \(T\) are finite sums of cone-wise spherical
integrals, while \(P,Q,R\) are weighted sums over facet planes; all may depend
on the embedding and metric.  Matroid invariance would amount to proving
independence of these choices.

\begin{example}[The standard tropical plane]\label{ex:U35}
Consider the fine Bergman fan \(B(U_{3,5})\) in
\[
 N_\R=\R^5/\R(1,1,1,1,1).
\]
Let \(u_i\) be the image of the \(i\)-th standard basis vector.
Its twenty maximal cones are
\[
 \operatorname{cone}(u_i,u_i+u_j),\qquad i\neq j,
\]
with their canonical tropical unit weights
\cite{ArdilaKlivans,SpeyerTropicalLinearSpaces}.  In the quotient Euclidean
metric,
\[
 \norm{u_i\wedge u_j}=\sqrt{\frac35},\qquad
 \cos\angle(u_i,u_i+u_j)=\sqrt{\frac38}.
\]
The quotient lattice has covolume \(1/\sqrt5\).  After scaling the metric
to lattice covolume one, every facet bivector has norm
\(\sqrt3/5^{1/4}\), and the scaling preserves the angle.  Hence
\begin{equation}\label{eq:U35mass}
 M_g(B(U_{3,5}))=
 \frac{10\sqrt3}{\pi5^{1/4}}
 \arccos\sqrt{\frac38}.
\end{equation}
The alternating group \(A_5\) acts through its irreducible standard real
representation on \(\mathbf1^\perp\), so \(T=\Id_4/4\).  Moreover,
\(\bigwedge^2\mathbf1^\perp\simeq\mathbf3\oplus\mathbf3'\); these two
inequivalent irreducibles are the self-dual and anti-self-dual spaces.
Schur's lemma gives
\[
 P=Q=\frac13\Id_3,\qquad R=0.
\]
Thus this link law is Pl\"ucker-isotropic, \(\E B^2=1/6\), and the standard
volume-one metric is critical for \(M_g\).  Equation~\eqref{eq:U35mass} gives
its mass there; global minimality requires a separate argument.
\end{example}

The theorem covers both realizable and nonrealizable balanced fans.  Brugall\'e
and Shaw exhibit additional local intersection obstructions for curves
realizable in tropical surfaces \cite{BrugalleShaw}.

\section{Higher-dimensional formulas and isotropic zero examples}

The coarea identity and the affine first variation extend to every
dimension.  Let \(F,G\) be effective balanced rational \(d\)-fans in a
rank-\(2d\) lattice, represented using a fixed pointed rational subdivision.
Write
\[
 q_d(F)=\deg(F\cdot F),\qquad
 m_d(F)=\min_{A:N\twoheadrightarrow\Z^d}\deg(A_*F),
\]
and set
\[
 \omega_{d-1}=\operatorname{area}(S^{d-1}).
\]
On the spherical link of a facet, define
\begin{equation}\label{eq:highmass}
 d\mu_{F,g}=\frac{w_\sigma\norm{p_\sigma}_g}{\omega_{d-1}}\,
 d\mathcal H^{d-1}.
\end{equation}
Put
\[
 M_g(F)=\mu_{F,g}(S_g^{2d-1}),\qquad
 M_*(F)=\inf_{\covol_g(N)=1}M_g(F).
\]
For \(F\neq0\), division by \(M_g(F)\) defines its probability law on
oriented flags; \(\E_{F,G}\) denotes the product of the laws of two nonzero
fans.
At a regular point choose an oriented orthonormal frame
\[
 z=x\wedge t_1\wedge\cdots\wedge t_{d-1}.
\]
For two such frames let \(B(z,z')\) be their \(2d\)-dimensional
determinant, and put
\begin{equation}\label{eq:Jd}
 \mathcal J_d(c)=\frac{\omega_{d-1}^2}{(2\pi)^d}
 \int_{r,\rho>0}r^{d-1}\rho^{d-1}
 e^{-(r^2+\rho^2-2cr\rho)/2}\,dr\,d\rho,
 \qquad -1\leq c<1.
\end{equation}

\begin{theorem}[Higher-dimensional Gaussian formula]\label{thm:high-gaussian}
For \(F\neq0\), \(G\neq0\), and a volume-one metric,
define the entire kernel to be zero on a facet pair whose planes span a
proper subspace of \(\R^{2d}\).  Then
\begin{equation}\label{eq:high-gaussian}
 \deg(F\cdot G)=M_g(F)M_g(G)
 \E_{F,G}\!\left[B(z,z')^2\mathcal J_d(x\cdot y)\right].
\end{equation}
For \(F=0\) or \(G=0\), equation
\eqref{eq:high-gaussian} carries the convention that both sides equal zero.
For \(-1\leq c<1\), the function \(\mathcal J_d\) is increasing and
\begin{equation}\label{eq:Jdmin}
 \mathcal J_d(c)\geq\mathcal J_d(-1)
 =\frac{2\Gamma(d)^3}
 {\Gamma(2d)\Gamma(d/2)^2}.
\end{equation}
\end{theorem}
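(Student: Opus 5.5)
The plan is to repeat the proof of Theorem~\ref{thm:mixed-gaussian} with \(d\)-dimensional cones in place of two-dimensional ones. Fix pointed facets \(\sigma\in F\), \(\tau\in G\) whose linear spans are complementary. Parametrize \(\relint(\sigma-\tau)\) by
\[
 (r,\xi,\rho,\eta)\longmapsto r\,x(\xi)-\rho\,y(\eta),
\]
where \(\xi\) runs over the \((d-1)\)-dimensional spherical link \(I_\sigma\) and \(\eta\) over \(I_\tau\). Take orthonormal tangent frames \(t_1,\dots,t_{d-1}\) at \(x\) and \(u_1,\dots,u_{d-1}\) at \(y\). The derivative columns are then \(x,\ r t_1,\dots,r t_{d-1},\ -y,\ -\rho u_1,\dots,-\rho u_{d-1}\). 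This gives absolute Jacobian \(r^{d-1}\rho^{d-1}|B(z,z')|\), the analogue of Lemma~\ref{lem:radial-jacobians}. As before, \(B\) is constant on the facet pair, because \(z\) and \(z'\) are the unit \(d\)-vectors of the fixed oriented facet planes.

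The Gaussian probability that a standard displacement \(\delta\in\R^{2d}\) lies in \(\sigma-\tau\) is therefore
\[
 \frac{|B|}{(2\pi)^d}\int_{I_\sigma\times I_\tau}\int_{r,\rho>0}r^{d-1}\rho^{d-1}e^{-\norm{rx-\rho y}^2/2}\,dr\,d\rho\,d\mathcal H^{d-1}d\mathcal H^{d-1}.
\]
Since \(\norm{rx-\rho y}^2=r^2+\rho^2-2cr\rho\) with \(c=x\cdot y\), the inner integral equals \((2\pi)^d\omega_{d-1}^{-2}\mathcal J_d(x\cdot y)\).

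The lattice step is unchanged. With covolume one,
\[
 [N:N_\sigma+N_\tau]=\norm{p_\sigma}\norm{p_\tau}|B|,
\]
since the covolume of \(N_\sigma+N_\tau\) is \(|p_\sigma\wedge p_\tau|\). Now sum the fan-displacement rule \(\deg(F\cdot G)=\sum w_\sigma v_\tau[N:N_\sigma+N_\tau]\) over contributing pairs. By conical invariance this may be averaged over an unrestricted Gaussian, and nontransverse pairs are null. The factors combine as
\[
 \frac{w_\sigma\norm{p_\sigma}}{\omega_{d-1}}\,\frac{v_\tau\norm{p_\tau}}{\omega_{d-1}}\,B^2\,\mathcal J_d,
\]
which, summed over facet pairs and normalized by \(M_g(F)M_g(G)\), is exactly \eqref{eq:high-gaussian}. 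The only point to check is that the normalizations \(\omega_{d-1}\) in \eqref{eq:highmass} and \(\omega_{d-1}^2/(2\pi)^d\) in \eqref{eq:Jd} match; they do by construction. Also \(B^2=|B|\cdot|B|\), one factor from the Jacobian and one from the index.

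Monotonicity of \(\mathcal J_d\) follows pointwise from the factor \(e^{cr\rho}\), as in the two-dimensional case. For the endpoint, substitute \(s=r+\rho\) and \(r=s\theta\), so that \(dr\,d\rho=s\,ds\,d\theta\). This gives
\[
 \int r^{d-1}\rho^{d-1}e^{-s^2/2}=B(d,d)\int_0^\infty s^{2d-1}e^{-s^2/2}\,ds=\frac{\Gamma(d)^2}{\Gamma(2d)}\,2^{d-1}\Gamma(d).
\]
With \(\omega_{d-1}=2\pi^{d/2}/\Gamma(d/2)\), the prefactor is \(\omega_{d-1}^2/(2\pi)^d=4/(2^d\Gamma(d/2)^2)\). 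Multiplying the two gives \(2\Gamma(d)^3/(\Gamma(2d)\Gamma(d/2)^2)\). As a check, \(d=2\) gives \(2/(6)=1/3\), matching \eqref{eq:Jlower}.

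I expect the main obstacle to be bookkeeping rather than conceptual. One task is the Jacobian determinant's orientation and sign conventions for \(d\)-vectors, which the absolute value absorbs. The other is justifying that the ordered-pair displacement rule holds verbatim in rank \(2d\) for pointed rational subdivisions, citing \cite{FultonSturmfels,JensenYu}.
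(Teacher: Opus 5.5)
Your proposal is correct and follows essentially the same route as the paper: the radial Jacobian \(r^{d-1}\rho^{d-1}|B|\), the transverse index \(\norm{p_\sigma}\norm{p_\tau}|B|\), conical averaging of the fan-displacement rule over an unrestricted Gaussian, and the substitution \(s=r+\rho\), \(r=s\theta\) for the endpoint value \(\mathcal J_d(-1)\). Your explicit matching of the normalizations \(\omega_{d-1}\) in the link measure and \(\omega_{d-1}^2/(2\pi)^d\) in \(\mathcal J_d\) spells out what the paper's proof compresses into ``comparing the resulting facet-pair sum with the mass density,'' and your endpoint arithmetic agrees with it.
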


\begin{proof}
The parameterization of a difference cone has Jacobian
\[
 r^{d-1}\rho^{d-1}|B(z,z')|,
\]
and the transverse lattice index contributes
\(\norm{p_\sigma}\norm{p_\tau}|B(z,z')|\).  As in the proof of
Theorem~\ref{thm:mixed-gaussian}, conical scaling permits averaging the
fan-displacement rule over an unrestricted Gaussian.  Comparing the
resulting facet-pair sum with \eqref{eq:highmass} proves
\eqref{eq:high-gaussian}.  The integrand is
pointwise increasing in \(c\), through the factor \(e^{cr\rho}\).  At
\(c=-1\), the substitution
\(s=r+\rho\), \(t=r/s\) gives
\[
 \int r^{d-1}\rho^{d-1}e^{-(r+\rho)^2/2}\,dr\,d\rho
 =\frac{2^{d-1}\Gamma(d)^3}{\Gamma(2d)}.
\]
Multiplication by
\(\omega_{d-1}^2/(2\pi)^d=
2^{2-d}/\Gamma(d/2)^2\) proves \eqref{eq:Jdmin}.
\end{proof}

Under full Pl\"ucker isotropy, the mixed second moment is fixed.  If
\[
 \E_F(z\otimes z)=
 \binom{2d}{d}^{-1}\Id_{\wedge^d\R^{2d}},
\]
then \(\E_{F,G}B^2=\binom{2d}{d}^{-1}\) for every nonzero \(G\), and
\begin{equation}\label{eq:high-mixed}
 \deg(F\cdot G)\geq
 \frac{\mathcal J_d(-1)}{\binom{2d}{d}}M_g(F)M_g(G).
\end{equation}
The inequality also holds for \(G=0\), when both sides vanish.

The first-variation argument has the following \(d\)-dimensional form.  For a
stationary weighted
\((d-1)\)-complex in \(S^{2d-1}\), set
\[
S_g=\E(\Pi_z),\qquad A_g=\E(xx^T).
\]
Stationarity gives \(S_g=dA_g\).  Indeed, for the
\((d-1)\)-dimensional tangent projection \(\Pi_{\mathrm{tan}}\), integration of
\[
\Delta_{\rm link}(x_i x_j)
 =2(\Pi_{\mathrm{tan}})_{ij}-2(d-1)x_ix_j
\]
has its boundary contributions canceled by stationarity, and hence
\(\E\Pi_{\mathrm{tan}}=(d-1)\E(xx^T)\); adding the radial projection proves the
claim.  The link of a balanced rational fan is stationary with precisely the
density in \eqref{eq:highmass}.  Indeed, along a codimension-one face
\(\tau\), choose a primitive multivector \(p_\tau\) of its face lattice and
an outward lift \(u_\sigma\) of each primitive quotient normal, so that
\(p_\sigma=p_\tau\wedge u_\sigma\).  If \(n_\sigma\) is the corresponding
outward unit conormal, then
\[
 \norm{p_\sigma}n_\sigma
 =\norm{p_\tau}\operatorname{pr}_{(\Span\tau)^\perp}(u_\sigma),
\]
so the conormal boundary terms cancel by tropical balancing.  Under
\(h_\eta=e^{\eta H}\), the density
in \eqref{eq:highmass} is multiplied by
\[
 \frac{\norm{\bigwedge^dh_\eta z}^{\,2}}{\norm{h_\eta x}^{\,d}},
\]
and hence
\[
 DM_g(H)=M_g\E\!\left[
 2\tr(H\Pi_z)-d\ip{Hx}{x}\right]
 =M_g\tr(HS_g).
\]
On \(\mathcal P_d=SL_{2d}(\R)/SO(2d)\), use the analogous invariant metric
normalization.  The last display says
\[
 \nabla\log M_g=S_g-\frac12\Id_{2d}.
\]
Since \(\mathcal P_d\) is a proper Hadamard manifold and the same
fixed-subdivision argument makes \(M_g(F)\) smooth,
Lemma~\ref{lem:approx-critical} produces, whenever
\(M_*(F)>0\),
\begin{equation}\label{eq:high-isotropy}
 M_{g_k}(F)\to M_*(F),\qquad
 \norm{S_{g_k}-\frac12\Id_{2d}}_{\mathrm{HS},g_k}\to0.
\end{equation}

Equation~\eqref{eq:high-isotropy} controls the first moment \(S_g\).  In
dimension three this still permits \(\E B^2=0\), as the following example
shows.

\begin{example}[Isotropic zero-intersection example in dimension three]
\label{ex:d3}
In \(\R^6\), let
\[
 H=[P_{123}]+[P_{145}]+[P_{246}]+[P_{356}].
\]
Every pair of these coordinate three-planes has a common coordinate line,
so
\[
 \deg(H\cdot H)=0,\qquad \E B^2=0.
\]
Every coordinate direction occurs in exactly two of the four planes, hence
the standard metric satisfies \(\E\Pi_z=\Id_6/2\).

This metric is globally mass minimizing.  Let \(G\) be the Gram matrix of
an arbitrary volume-one metric and let \(S_1,\ldots,S_4\) be the four
coordinate triples above.  Since each coordinate occurs in two of these sets,
the fractional Hadamard inequality \cite{MadimanTetali} gives
\[
 \det G\leq\prod_{i=1}^4\det G[S_i]^{1/2}.
\]
Since \(\det G=1\),
\[
 \prod_i\norm{p_{S_i}}\geq1,\qquad
 \sum_i\norm{p_{S_i}}\geq4.
\]
Thus, in the normalization \eqref{eq:highmass},
\[
 M_*(H)=4>0.
\]
Finally \(m_3(H)=0\): the kernel \(P_{123}\) meets every component plane,
so all their images have dimension at most two.
\end{example}

This forces \(C=0\) in every bound
\(q_3(F)\geq C M_*(F)^2\) based only on ambient isotropy.  Since
\(m_3(H)=0\), the corresponding \(m_3\)-inequality is vacuous on this
example.

\subsection{The higher-dimensional projection step}

For a rank-\(d\) quotient there is an exact angular expression.  Let
\(A:N\twoheadrightarrow\Z^d\), and let \(\alpha\in\bigwedge^dN^*\) be its
primitive row multivector.  With a volume-one target metric, put
\[
 D_A(z)=|\det(Ax,At_1,\ldots,At_{d-1})|.
\]
On a full-rank facet,
\[
 D_A(z)=\frac{|\alpha(p_\sigma)|}{\norm{p_\sigma}},\qquad
 \operatorname{Jac}_{d-1}\!\left(x\longmapsto
 \frac{Ax}{\norm{Ax}}\right)
 =\frac{D_A(z)}{\norm{Ax}^{d}}.
\]
The second identity follows by comparing the radial Jacobian of
\((r,x)\mapsto rAx\) with polar coordinates in the target.  Angular
averaging of the push-forward fan therefore gives
\[
 d_A(F)
 =\frac1{\omega_{d-1}}
 \sum_{\rank(A|_{\Span\sigma})=d}w_\sigma|\alpha(p_\sigma)|
 \int_{\sigma\cap S_g^{2d-1}}
 \frac{D_A(z)}{\norm{Ax}^{d}}\,d\mathcal H^{d-1}.
\]
Using the first identity and \eqref{eq:highmass}, this becomes
\begin{equation}\label{eq:high-projection-exact}
 d_A(F)=\int
 \frac{D_A(z)^2}{\norm{Ax}^{\,d}}\,d\mu_{F,g}.
\end{equation}
On a rank-deficient facet the integrand is assigned value zero, matching its
zero top-dimensional push-forward contribution.
For \(d=2\), the pointwise inequality
\[
 \frac{D_A(z)^2}{\norm{Ax}^2}\leq\norm{At}^2
\]
turns \eqref{eq:high-projection-exact} into the linear trace estimate
\eqref{eq:projection}.  For \(d\geq3\), suppose that
\(\norm{Ax}=\varepsilon\) and the other projected frame vectors have unit
size.  Then the integrand can be of order \(\varepsilon^{2-d}\), which is
unbounded as \(\varepsilon\to0\).  A higher-dimensional estimate must
therefore use the integrated geometry of the stationary link.

The obstruction separates into two quantitative hypotheses.  Let
\[
 \gamma_{2d,d}=
 \sup_{\covol(L)=1}
 \min_{\substack{\alpha\in\wedge^dL\\
 \alpha\ {\rm primitive,\ decomposable}}}\norm\alpha^2
\]
where the supremum runs over rank-\(2d\) Euclidean lattices \(L\).  This is
the Hermite--Rankin constant.  Assume there are constants
\(\beta_d,\rho_d>0\), independent of \(F\), such that every fan \(F\) in a
given class with \(M_*(F)>0\) has an isotropic almost-minimizing sequence
\(g_k\) for which
\[
 \liminf_{k\to\infty}\E_{g_k} B^2\geq\beta_d.
\]
Suppose that, along the same sequence, one may choose for each \(k\) a Rankin
quotient \(A_k\), with primitive row multivector \(\alpha_k\), satisfying
\[
 \norm{\alpha_k}_{g_k}^2\leq\gamma_{2d,d},\qquad
 d_{A_k}(F)\leq(\rho_d+o_{k\to\infty}(1))
 \norm{\alpha_k}_{g_k}M_{g_k}(F).
\]
For every such \(F\), the projection hypothesis gives
\(m_d(F)\leq\rho_d\sqrt{\gamma_{2d,d}}M_*(F)\).  The Gaussian formula and
the transversality hypothesis give
\(q_d(F)\geq\mathcal J_d(-1)\beta_dM_*(F)^2\).  Hence
\begin{equation}\label{eq:conditional-high}
 q_d(F)\geq
 \frac{\mathcal J_d(-1)\beta_d}
 {\rho_d^2\gamma_{2d,d}}\,m_d(F)^2.
\end{equation}
For \(d=2\),
\[
 \mathcal J_2(-1)=\frac13,\quad
 \beta_2=\frac16,\quad
 \rho_2=\frac12,\quad
 \gamma_{4,2}=\frac32,
\]
and the coefficient in \eqref{eq:conditional-high} equals \(4/27\).
In this case the coarse estimate \eqref{eq:coarse} also covers
\(M_*(F)=0\).
For the class of all three-fans, Example~\ref{ex:d3} shows that the largest
admissible value of \(\beta_3\) is zero.  Determining the optimal projection
constant \(\rho_d\) in the preceding asymptotic projection hypothesis remains
open.

\section{Conclusion}

The proof factors through
\[
 m(F)\leq\sqrt{\frac38}\,M_*(F),\qquad
 q(F)\geq\frac1{18}M_*(F)^2.
\]
Affine normalization supplies the common almost-isotropic sequence.  The
projection formula and Rankin's theorem give the first estimate; the Gaussian
formula and the four-dimensional Hodge decomposition give the second.  Three
questions remain: characterize the fans with
\(M_*=0\); find hypotheses under which ambient isotropy yields positive
Pl\"ucker transversality in higher dimensions; and determine the optimal
constant in the four-dimensional inequality.

\section*{Data availability}
No new data were generated or analysed in support of this research.

\section*{Acknowledgements}
ChatGPT Pro (GPT-5.6) was used to assist in finding and subsequently editing a
proof of the main estimate.  OpenAI Codex was used in preparing this manuscript
for language editing, \LaTeX{} preparation, literature and bibliography checks,
and assistance in drafting, restructuring, and checking portions of the
exposition and mathematical arguments.  The author assumes full responsibility
for the final text, claims, proofs, and references.

\sloppy

\fussy

\end{document}